\newcommand*{\mailto}[1]{\href{mailto:#1}{\nolinkurl{#1}}}
\newcommand{\arxiv}[1]{\href{http://arxiv.org/abs/#1}{arXiv:#1}}
\newtheorem{theorem}{Theorem}[section]
\newtheorem{proposition}[theorem]{Proposition}
\newtheorem{corollary}[theorem]{Corollary}
\newtheorem{hypothesis}[theorem]{Hypothesis}
\newtheorem{remark}[theorem]{Remark}
\newcommand{\R}{{\mathbb R}}
\newcommand{\N}{{\mathbb N}}
\newcommand{\Z}{{\mathbb Z}}
\newcommand{\C}{{\mathbb C}}
\newcommand{\spr}[2]{\langle #1 , #2 \rangle}
\newcommand{\dbspr}[2]{[ #1 , #2 ]}
\newcommand{\E}{\mathrm{e}}
\newcommand{\I}{\mathrm{i}}
\newcommand{\im}{\mathrm{Im}}
\newcommand{\re}{\mathrm{Re}}
\newcommand{\lam}{\lambda}
\newcommand{\dom}[1]{\mathrm{dom}(#1)}
\newcommand{\trace}[1]{\mathrm{tr}(#1)}
\newcommand{\oo}{o}
\numberwithin{equation}{section}
\begin{document}

\title[Inverse uniqueness results for weighted Dirac operators]{Inverse uniqueness results for one-dimensional weighted Dirac operators}

\author[J.\ Eckhardt]{Jonathan Eckhardt}
\address{Institut Mittag-Leffler\\
Aurav\"agen 17\\ SE-182 60 Djursholm\\ Sweden}
\email{\mailto{jonathaneckhardt@aon.at}}

\author[A.\ Kostenko]{Aleksey Kostenko}
\address{Faculty of Mathematics\\ University of Vienna\\
Oskar-Morgenstern-Platz 1\\ 1090 Wien\\ Austria}
\email{\mailto{duzer80@gmail.com};\mailto{Oleksiy.Kostenko@univie.ac.at}}

\author[G.\ Teschl]{Gerald Teschl}
\address{Faculty of Mathematics\\ University of Vienna\\
Oskar-Morgenstern-Platz 1\\ 1090 Wien\\ Austria\\ and International
Erwin Schr\"odinger
Institute for Mathematical Physics\\ Boltzmanngasse 9\\ 1090 Wien\\ Austria}
\email{\mailto{Gerald.Teschl@univie.ac.at}}
\urladdr{\url{http://www.mat.univie.ac.at/~gerald/}}

\dedicatory{To Vladimir Aleksandrovich Marchenko with deep admiration}
\thanks{in "Spectral Theory and Differential Equations: V.A. Marchenko 90th Anniversary Collection", E. Khruslov, L. Pastur and D. Shepelsky (eds), 117--133, Advances in the Mathematical Sciences {\bf 233}, Amer. Math. Soc., Providence, 2014.}
\thanks{{\it Research supported by the Austrian Science Fund (FWF) under Grant No.\ Y330 and M1309 as well as by the AXA Research Fund under the Mittag-Leffler Fellowship Project}}

\keywords{Dirac operators, canonical systems, inverse spectral theory, de Branges spaces}
\subjclass[2010]{Primary 34L40, 34B20; Secondary 46E22, 34A55}

\begin{abstract}
Given a one-dimensional weighted Dirac operator we can define a spectral measure
by virtue of singular Weyl--Titchmarsh--Kodaira theory. Using the theory of de Branges spaces
we show that  the spectral measure uniquely determines the Dirac operator up to a gauge transformation.
Our result applies in particular to radial Dirac operators and extends the classical results for
Dirac operators with one regular endpoint. Moreover, our result also improves the currently
known results for canonical (Hamiltonian) systems. If one endpoint is in the limit circle case, we also establish
corresponding two-spectra results.
\end{abstract}

\maketitle

\section{Introduction}

In this paper we are concerned with inverse uniqueness results for one-di\-men\-sion\-al weighted Dirac operators.
Such operators include the case of one-dimensional Dirac operators which play an important role as a toy model
in relativistic quantum mechanics \cite{baev,th} as well as canonical (Hamiltonian) systems which are of independent interest.
Moreover, such operators also arise from the three dimensional Dirac equation with an (e.g.) radially symmetric potential.
In this latter case both endpoints of the resulting  radial operator will be singular and classical Weyl--Titchmarsh--Kodaira
theory leads to a definition of two by two Weyl--Titchmarsh matrices. In the case of one-dimensional spherical Schr\"odinger
operators this has led to the development of a singular version of Weyl--Titchmarsh--Kodaira theory which allows to introduce
a scalar function, the so-called  singular  Weyl function; see \cite{fl,fll,gz,kst,kst2,kst3,kt,kt2,kl}
and the references therein. Moreover, in \cite{je,je2,egnt2} this theory has been combined with the theory of de Branges  to obtain new powerful
inverse uniqueness results for Sturm--Liouville equations which have important applications to (e.g.) the Camassa--Holm equation \cite{et}.
The basic theory in the case of one-dimensional Dirac operators has been established in \cite{bekt} (see also \cite{ekt}) and the aim of the present
paper is to combine this basic theory with the theory of de Branges spaces extending the aforementioned results \cite{je,je2} to the case of Dirac
operators. As our main result we will show that the spectral measure uniquely determines the Dirac operator up to a gauge transformation
extending the classical results for the case of a regular endpoint. If one endpoint is in the limit circle case, such that we can vary the
boundary condition at this endpoint,  then we also establish corresponding two-spectra results. 

We apply our findings to the case of radial Dirac operators which have attracted significant interest recently \cite{ahm, ahm2, ahm3, ser}.
In particular, we establish a Borg--Marchenko \cite{borg,mar} result for this case extending the results from \cite{clge}. We also
extend some of the results for canonical systems from \cite{ww}.

For closely related research we also refer to \cite{egnt,egnt2,et2,gkm,sak} as well as the recent monograph \cite{sakb}.

\section{Dirac operators with strongly singular coefficients}\label{s2}

Let $(a,b)$ be a bounded or unbounded interval and consider the differential expression $\tau$ given by  
\begin{align}\label{eqnDiffExp}
 \tau f(x) = R(x)^{-1} \left( J f'(x) + Q(x) f(x) \right), \quad x\in (a,b),
\end{align}
where $J$ is the symplectic matrix  
\begin{align}
 J = \begin{pmatrix} 0 & -1 \\ 1 & 0 \end{pmatrix}
\end{align} 
and the coefficients are presumed to satisfy the following set of assumptions. 

\begin{hypothesis}\label{hyp1} Suppose that the functions $Q$, $R: (a,b)\rightarrow \R^{2\times 2}$ are measurable and satisfy the following additional conditions: 
 \begin{enumerate}[label=\emph{(}\roman*\emph{)}, leftmargin=*, widest=iiii]
  \item The functions $\|Q\|$ and $\|R\|$ are locally integrable on $(a,b)$. 
  \item The matrix $Q(x)$ is self-adjoint for almost all $x\in(a,b)$. 
  \item The matrix $R(x)$ is positive definite for almost all $x\in(a,b)$. 
 \end{enumerate}
\end{hypothesis}

\begin{remark}\label{rem:hyp}
The assumption \emph{(}iii\emph{)} is too restrictive since it excludes a number of interesting cases (for instance, Krein's strings with mass distributions having jumps provide examples of $R$ with $\det(R)=0$ on some intervals \cite[Chapter VI.8]{gk}, see also \cite{lawi, lawo, ww}). However, the results of the paper remain true under much more general assumptions on the coefficients. Namely, it suffices to require that $R$ is non-zero a.e.\ on $(a,b)$ and that the spectral problem 
\begin{align}\label{eqnSP}
 J f'(x) + Q(x) f(x) = z R(x) f(x), \quad x\in (a,b),
\end{align}
is definite in the sense of \cite[\S 2.5]{lema03}, that is, the system 
\begin{align}
J f'(x) + Q(x) f(x) =0,\quad R(x) f(x)=0,\quad x\in (a,b),
\end{align}
has only a trivial solution. Note that in the case $Q=0$ on $(a,b)$, the latter is equivalent to the fact that $R$ is of positive type, i.e., the matrix $\int_c^d R(x)dx$ is invertible for some subinterval $(c,d)\subset (a,b)$ (cf.\ \cite{gk, lema03}).

We decided to restrict our considerations to the case of positive definite $R$ since, on the one hand, our main motivation is the Dirac equation (and in this case $R= I$ on $(a,b)$). On the other hand, a rigorous definition of the operator (linear relation) associated with the spectral problem \eqref{eqnSP} in this case is lengthy (cf., e.g., \cite{ka2, lema03, ww}), however, the proofs of our main results remain the same.
\end{remark}

For given functions $f$, $g:(a,b)\rightarrow \C^2$, we introduce the Wronskian $W_x(f,g)$ via 
\begin{align}\label{eqnWdef}
 W_x(f,g) = \spr{J f(x)^\ast}{g(x)} = f_1(x) g_2(x) - f_2(x) g_1(x), \quad x\in (a,b). 
\end{align}
Here and henceforth, subscripts will denote the respective component of a vector-valued function. 
In particular, the scalar product in~\eqref{eqnWdef} is the usual one in $\C^2$. 
As long as the functions $f$ and $g$ are at least locally absolutely continuous, we have the following Lagrange identity 
\begin{align}\label{eqnLagrange}
 W_\beta(f,g) - W_\alpha(f,g) = \int_\alpha^\beta \spr{\tau f(x)^\ast}{R(x) g(x)} - \spr{f(x)^\ast}{R(x) \tau g(x)} dx
\end{align}
for all $\alpha$, $\beta\in(a,b)$ with $\alpha < \beta$; see \cite[Theorem~2.3]{wdl}. 

It is well known that the differential expression $\tau$ gives rise to self-adjoint operators in the Hilbert space $L^2((a,b);R(x)dx)$ associated with the inner product
\begin{align}
 \spr{f}{g} = \int_a^b \spr{f(x)}{R(x) g(x)} dx, \quad f,\, g\in L^2((a,b);R(x)dx).
\end{align} 
Although we use the same notation for the scalar products in $L^2((a,b);R(x)dx)$ and in $\C^2$, it will be clear from the context which one is meant. 
In the following, let $S$ be a self-adjoint realization of the differential expression $\tau$ with separated boundary conditions. 
This means that the operator $S$ is given by   
\begin{align}\label{eqnDefS}
\begin{split}
 \dom{S} = \lbrace f \in & L^2((a,b);R(x)dx) \,|\, f_1,\, f_2\in AC_{\text{loc}}(a,b), \\ & \tau f\in L^2((a,b);R(x)dx),~  W_a(f,u_a) = W_b(f,u_b) = 0 \rbrace, 
\end{split}
\end{align}
and $S f = \tau f$ for $f\in\dom{S}$. 
Hereby, the functions $u_a$, $u_b$ can for example be chosen as real-valued solutions of $\tau u=0$ which lie in $L^2((a,b);R(x)dx)$ near $a$, $b$, respectively.
In this case, the limits 
\begin{align}
 W_a(f,u_a) = \lim_{x\rightarrow a} W_x(f,u_a) \quad\text{and}\quad W_b(f,u_b) = \lim_{x\rightarrow b} W_x(f,u_b)
\end{align}
in~\eqref{eqnDefS} are guaranteed to exist in view of the Lagrange identity~\eqref{eqnLagrange}.  

More precisely, the need for boundary conditions in~\eqref{eqnDefS} actually depends on whether the limit circle case or the limit point case prevails at the respective endpoint. 
Thereby, an endpoint is said to be in the {\em limit circle} case if for every $z\in\C$, all solutions of $(\tau-z)u=0$ lie in $L^2((a,b);R(x)dx)$ near the respective endpoint.
Otherwise, for each $z\in\C$ there is a solution of $(\tau -z)u=0$ which does not lie in $L^2((a,b);R(x)dx)$ near the respective endpoint \cite[Theorem~5.6]{wdl} and the endpoint is said to be in the {\em limit point} case.
It is known that if an endpoint is in the limit point case, then the boundary condition there is superfluous, that is, the respective function $u_a$ or $u_b$ can be chosen to be identically zero. 

In this article, we will consider quite singular endpoints to the extent that the spectrum of $S$ remains simple. 
To this end, we say that a function $\Phi: \C\times(a,b)\rightarrow \C$ is a {\em real entire solution} of $(\tau-z)u=0$ if $\Phi(z,\cdot\,)$ is a solution of $(\tau-z)u=0$ for every $z\in\C$ and both of the functions $\Phi_1(\,\cdot\,,c)$ and $\Phi_2(\,\cdot\,,c)$ are real entire for one (and hence for all) $c\in(a,b)$. 
Moreover, in this case, we say that $\Phi$ lies in $\dom{S}$ near $a$ if for every $z\in\C$, the function $\Phi(z,\cdot\,)$ lies in $L^2((a,b);R(x)dx)$ near $a$ and satisfies the boundary condition at $a$ if $\tau$ is in the limit circle case there.

\begin{hypothesis}\label{hyp2}
There is a nontrivial real entire solution $\Phi$ of $(\tau-z)u=0$ which lies in $\dom{S}$ near $a$. 
\end{hypothesis}

It is known that in general, the essential spectrum of $S$ is made up of a part arising from the left endpoint and another part from the right endpoint.  
The assumption in Hypothesis~\ref{hyp2} is equivalent to presuming that there is no contribution coming from the left endpoint (see \cite[Section~2]{bekt}, \cite{gz}, \cite{kst2}). 
In this case, the spectrum of $S$ turns out to be simple as we will see below. 

Under the presumption of Hypothesis~\ref{hyp2}, one may introduce the transformation 
\begin{align}\label{eqnTrans}
 \hat{f}(z) = \int_a^b \spr{f(x)^\ast}{R(x) \Phi(z,x)} dx, \quad z\in\C,
\end{align}
for all functions $f\in L^2((a,b);R(x)dx)$ which vanish almost everywhere near the right endpoint $b$. 
Given this, it is possible to introduce a scalar spectral measure $\rho$ on $\R$ (cf.\ \cite[Section~2]{bekt}, \cite{kst2}, \cite{fll}) such that 
\begin{align}
 \int_\R |\hat{f}(\lambda)|^2 d\rho(\lambda) = \int_a^b \spr{f(x)}{R(x)f(x)} dx
\end{align} 
for all functions $f$ vanishing almost everywhere near $b$. 
Moreover, the transformation in~\eqref{eqnTrans} uniquely extends to a unitary operator $\mathcal{F}$ from $L^2((a,b);R(x)dx)$ onto $L^2(\R;d\rho)$, which maps the operator $S$ onto multiplication with the independent variable in $L^2(\R;d\rho)$. 
Hereby, the inverse of $\mathcal{F}$  is given by 
\begin{align}
 \mathcal{F}^{-1} F(x) = \int_\R \Phi(\lambda,x) F(\lambda) d\rho(\lambda), \quad x\in(a,b),
\end{align}
for functions $F\in L^2(\R;d\rho)$ with compact support. 
Because of this, the measure $\rho$ is called the spectral measure of $S$ associated with the real entire solution $\Phi$. 

One way to obtain the spectral measure $\rho$ is by introducing a singular Weyl--Titchmarsh--Kodaira function \cite{kst2}, \cite{bekt}. 
Therefore, one requires an additional real entire solution $\Theta$ of $(\tau-z)u=0$ such that $W(\Theta,\Phi)=1$. 
Given such a fundamental system $\Theta$, $\Phi$ of solutions, one may introduce the corresponding singular Weyl--Titchmarsh--Kodaira function $M$ on $\C\backslash\R$ by requiring that the solution
\begin{align}\label{eq:m_sing}
 \Psi(z,x) = \Theta(z,x) + M(z) \Phi(z,x), \quad x\in(a,b),
\end{align}
lies in $\dom{S}$ near $b$ for every $z\in\C\backslash\R$. 
With this definition, the spectral measure $\rho$ of $S$ associated with the real entire solution $\Phi$ is given by 
\begin{align}\label{eqnSMdef}
 \rho((\lambda_1,\lambda_2]) = \lim_{\delta\rightarrow 0} \lim_{\varepsilon\rightarrow 0} \frac{1}{\pi} \int_{\lambda_1+\delta}^{\lambda_2+\delta} \im(M(\lambda-\I\varepsilon)) d\lambda,
\end{align}
for all $\lambda_1$, $\lambda_2\in\R$ with $\lambda_1<\lambda_2$. 

The purpose of the present article is, to investigate to which extent a spectral measure $\rho$ determines the coefficients as well as the boundary conditions of the underlying operator $S$.
 This will in general only be possible up to a so-called Liouville transformation.
 In order to present the concept of a Liouville transformation, let $\eta$ be a locally absolutely continuous, increasing bijection from $(a,b)$ onto  some other interval $(\tilde{a}, \tilde{b})$ and $\Gamma: (a,b)\rightarrow\R^{2\times 2}$ be locally absolutely continuous such that $\det(\Gamma(x))=1$ for all $x\in(a,b)$. 
Then it is simple to check that the transformation
\begin{align}\label{eqnLiou}
 \tilde{f} \mapsto f(x) = \Gamma(x) \tilde{f}(\eta(x)), \quad x\in(a,b),
\end{align}
maps solutions of the Dirac equation $\tilde{\tau}\tilde{u}=\tilde{g}$ 
on the interval $(\tilde{a},\tilde{b})$, with the coefficients $\tilde{Q}$, $\tilde{R}$ of the differential expression $\tilde{\tau}$ given by 
\begin{align}
  \eta' \tilde{R}\circ\eta & = \Gamma^\ast R\, \Gamma,  &  \eta' \tilde{Q}\circ\eta & = \Gamma^\ast Q\, \Gamma +  \Gamma^\ast J\, \Gamma',
\end{align}
onto solutions of the Dirac equation $\tau u=g$. 
Moreover, the transformation~\eqref{eqnLiou} gives rise to a unitary operator from $L^2((\tilde{a}, \tilde{b}); \tilde{R}(x)dx)$ onto $L^2((a,b);R(x)dx)$ which maps the operator $S$ onto a self-adjoint realization $\tilde{S}$ of $\tilde{\tau}$ in $L^2((\tilde{a}, \tilde{b}); \tilde{R}(x)dx)$. 
In particular, the Liouville transform maps any real entire solution $\tilde{\Phi}$ which lies in $\dom{\tilde{S}}$ near the left endpoint to a corresponding real entire solution $\Phi$ as in Hypothesis~\ref{hyp2}. 
Since the Wronskian is invariant under this transformation, the corresponding spectral measures are identical (that is, $\rho = \tilde{\rho}$).

A particular kind of these Liouville transformations can be performed when the weight matrix $R$ is assumed to be locally absolutely continuous. 
In this case, we may choose locally absolutely continuous functions $\eta$ and $\Gamma$ on $(a,b)$ such that 
\begin{align}
 \eta'(x) = \sqrt{\det(R(x))} \quad\text{and}\quad  \Gamma(x) = \sqrt{\eta'(x) R(x)^{-1}}, \quad x\in(a,b).
\end{align} 
Now the inverse of the transformation~\eqref{eqnLiou} maps the differential expression $\tau$ to the differential expression $\tilde{\tau}$ which now has the constant weight $\tilde{R}=I$. 

Upon performing another Liouville transform, we can even normalize the trace of the potential matrix to zero. 
Therefore, we choose $\eta(x)=x$ and $\Gamma$ such that 
\begin{align}
 \varphi'(x) = \frac{\trace{Q(x)}}{2}, \quad\text{and}\quad \Gamma(x) = \E^{\varphi(x)J}, \quad x\in(a,b).
\end{align}
Then the inverse of the transformation~\eqref{eqnLiou} maps the differential expression $\tau$ to the differential expression $\tilde{\tau}$ whose potential matrix now has trace zero almost everywhere. 
Hereby also note that one has $\tilde{R}=I$ as long as initially $R=I$.

Similarly, one can also perform a Liouville transformation which reduces the potential matrix to zero. 
Therefore, we simply choose $\eta(x)=x$ and let $\Gamma$ be a solution of $J\,\Gamma' + Q\,\Gamma = 0$ with $\det(\Gamma)=1$ (hereby note that the determinant of such a solution is constant because $\trace{JQ}$ is zero almost everywhere). 
With this choice, the inverse of the transformation~\eqref{eqnLiou} maps the differential expression $\tau$ to the differential expression $\tilde{\tau}$ which now has no potential matrix $\tilde{Q}=0$. 

Finally, we may also perform a Liouville transform which normalizes the determinant of the weight matrix to one. 
To this end, we choose $\Gamma$ to be constant and $\eta$ such that 
\begin{align}
 \eta'(x) = \sqrt{\det(R(x))}, \quad x\in(a,b). 
\end{align}
In this case, the inverse of the transformation~\eqref{eqnLiou} maps the differential expression $\tau$ to the differential expression $\tilde{\tau}$ whose  weight matrix has determinant one almost everywhere. 
Also note that under this transformation, $\tilde{\tau}$ has no potential term if and only if $\tau$ has none.

\section{Dirac operators and de Branges spaces}\label{secdbs}

The proof of our inverse uniqueness result relies on de Branges' subspace ordering theorem. 
Hence in this section, we will first introduce a chain of de Branges spaces associated with a self-adjoint Dirac operator $S$ with separated boundary conditions, which satisfies Hypothesis~\ref{hyp1} and Hypothesis~\ref{hyp2}. 

To this end, we fix some point $c\in (a,b)$ and consider the entire function 
\begin{align}\label{eq:3.1}
E(z,c)= \Phi_1(z,c) - \I \Phi_2(z,c), \quad z\in\C.
\end{align}
A simple calculation, using the Lagrange identity and the fact that 
\begin{align}
 W_a(\Phi(\zeta^\ast,\cdot\,),\Phi(z,\cdot\,)) = 0, \quad \zeta,\, z\in\C,
\end{align} 
 shows that one has for all $\zeta$, $z\in\C$ 
\begin{align}\label{eqnREP}
 \frac{E(z,c) E(\zeta,c)^\ast - E(\zeta^*,c) E(z^\ast,c)^\ast}{2\I (\zeta^* -z)} = \int_a^c \spr{\Phi(\zeta,x)}{R(x) \Phi(z,x)} dx.
\end{align}
In particular, choosing $\zeta=z$ this shows that $E(\,\cdot\,,c)$ is a de Branges function, that is, $|E(z,c)|> |E(z^\ast,c)|$ for all $z$ in the open upper complex half-plane $\C^+$. 
Moreover, one observes that $E(\,\cdot\,,c)$ does not have any real zero $\lambda$, since otherwise both components of $\Phi(\lam,c)$ would vanish.

The de Branges space associated with the de Branges function $E(\,\cdot\,,c)$ will be denoted with $B(c)$ (see \cite[Section~19]{dBbook} for details). 
It consists of all entire functions $F$ such that the integral  
\begin{align}
 \int_\R \frac{|F(\lambda)|^2}{|E(\lambda,c)|^2} d\lambda
\end{align} 
is finite and such that $F/E(\,\cdot\,,c)$ as well as $F^\#/E(\,\cdot\,,c)$ are of bounded type in $\C^+$ (that is, they can be written as the quotient of bounded analytic functions) with non-negative mean type. 
Hereby, $F^\#$ denotes the entire function 
\begin{align}
 F^\#(z) = F(z^\ast)^\ast, \quad z\in\C,
\end{align}
 and the mean type of a function $N$ which is of bounded type in $\C^+$ is the number  
\begin{align}
 \limsup_{y\rightarrow\infty} \frac{\ln |N(\I y)|}{y} \in [-\infty,\infty).
\end{align}
Equipped with the inner product
\begin{align}
 \dbspr{F}{G}_{B(c)} = \frac{1}{\pi} \int_\R \frac{F(\lam)^* G(\lam)}{|E(\lam,c)|^2} d\lam, 
 \quad F,\, G\in B(c),
\end{align}
the space $B(c)$ turns into a reproducing kernel Hilbert space. 
In view of \cite[Theorem~19]{dBbook}) and Equation~\eqref{eqnREP}, the reproducing kernel $K(\,\cdot\,,\cdot\,,c)$ such that 
\begin{align}
 \dbspr{K(\zeta,\cdot\,,c)}{F}_{B(c)} = F(\zeta), \quad F\in B(c),
\end{align} 
for every $\zeta\in\C$ is simply given by
\begin{equation}\label{eqndBschrRepKer}
 K(\zeta,z,c) = \int_a^c \spr{\Phi(\zeta,x)}{R(x) \Phi(z,x)} dx, \quad \zeta,\,z\in\C.
\end{equation} 
In what follows, we will always identify $L^2((a,c);R(x)dx)$ with the subspace of functions in $L^2((a,b);R(x)dx)$ which vanish almost everywhere on $(c,b)$.

\begin{theorem}\label{thmdBschrBT}
For every $c\in (a,b)$ the transformation $f\mapsto\hat{f}$ defined by \eqref{eqnTrans} is unitary from $L^2((a,c);R(x)dx)$ onto $B(c)$. 
In particular,
\begin{equation}\label{eqnBn}
  B(c) = \big\lbrace  \hat{f} \,\big|\, f\in L^2((a,c);R(x)dx) \big\rbrace.
\end{equation}
\end{theorem}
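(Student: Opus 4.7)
The plan is to exploit the reproducing kernel identity~\eqref{eqndBschrRepKer}, which already expresses $K(\zeta, z, c)$ as an $L^2((a,c); R(x)dx)$-type pairing of the special solutions $\Phi(\zeta, \cdot)$ and $\Phi(z, \cdot)$. First I would check that for each $z \in \C$ the function $\Phi(z, \cdot)$ lies in $L^2((a,c); R(x)dx)$: near $a$ this is Hypothesis~\ref{hyp2}, while on $[a+\varepsilon, c]$ it is a continuous solution of the underlying ODE. Formula~\eqref{eqnTrans} then defines, by Cauchy--Schwarz, a linear map $T : L^2((a,c); R(x)dx) \to \{\text{entire functions}\}$ via $Tf(z) = \hat{f}(z)$, with entireness of $Tf$ by standard dominated convergence. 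Using that $\Phi$ is real entire (so that $\Phi(\zeta^\ast, x)^\ast$ coincides with the row vector $\Phi(\zeta, x)^T$) together with the symmetry of $R$, a direct computation now gives the key identity
\begin{equation*}
 T\Phi(\zeta^\ast, \cdot) = K(\zeta, \cdot, c), \quad \zeta \in \C.
\end{equation*}

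Let $\mathcal{D}$ denote the linear span of $\{\Phi(\zeta^\ast, \cdot) : \zeta \in \C\}$ inside $L^2((a,c); R(x)dx)$, and $\mathcal{E}$ the linear span of $\{K(\zeta, \cdot, c) : \zeta \in \C\}$ inside $B(c)$. Expanding $\spr{\,\cdot\,}{\,\cdot\,}$ on $\mathcal{D}$ via~\eqref{eqndBschrRepKer} and $\dbspr{\,\cdot\,}{\,\cdot\,}_{B(c)}$ on $\mathcal{E}$ via the reproducing kernel property, the two Gram matrices coincide; hence $T$ restricts to an isometric bijection from $\mathcal{D}$ onto $\mathcal{E}$.

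Next I would verify that both spans are dense in their respective spaces. Density of $\mathcal{E}$ in $B(c)$ is standard: any $F \in B(c)$ orthogonal to every $K(\zeta, \cdot, c)$ satisfies $F(\zeta) = \dbspr{K(\zeta, \cdot, c)}{F}_{B(c)} = 0$ for all $\zeta \in \C$. For the density of $\mathcal{D}$, orthogonality of $f \in L^2((a,c); R(x)dx)$ to every $\Phi(\zeta^\ast, \cdot)$ translates, again via the real-entire property of $\Phi$ and the symmetry of $R$, into $\hat{f}(\zeta) = 0$ for all $\zeta \in \C$. Since the transformation $\mathcal{F} : L^2((a,b); R(x)dx) \to L^2(\R; d\rho)$ is unitary and coincides with the restriction of $\hat{f}$ to $\R$ on functions supported in $(a,c)$, this forces $f = 0$.

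Combining isometry with density, $T|_{\mathcal{D}}$ extends uniquely to a unitary operator $\tilde{T} : L^2((a,c); R(x)dx) \to B(c)$. To conclude that $\tilde{T}f = \hat{f}$ pointwise for every such $f$, which also yields the set identity~\eqref{eqnBn}, I would observe that for each fixed $z \in \C$ both $f \mapsto \hat{f}(z)$ and $f \mapsto \tilde{T}f(z)$ are bounded linear functionals on $L^2((a,c); R(x)dx)$---the former by Cauchy--Schwarz using $\Phi(z, \cdot) \in L^2((a,c); R(x)dx)$, the latter via the reproducing kernel estimate $|\tilde{T}f(z)| \leq \|K(z, \cdot, c)\|_{B(c)} \|f\|$---and they agree on the dense set $\mathcal{D}$. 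The main substantive point in this plan is the density of $\mathcal{D}$, which draws on the global unitarity of $\mathcal{F}$ from \cite{bekt,kst2}; everything else amounts to routine reproducing-kernel bookkeeping.
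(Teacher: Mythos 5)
Your argument is correct and follows essentially the same route as the paper, which does not spell out a proof but delegates it to \cite[Theorem~3.2]{je} and \cite[Theorem~3.1]{egnt2}: isometry on the span of the solutions $\Phi(\zeta^\ast,\cdot\,)$ versus the span of the kernels via \eqref{eqndBschrRepKer}, density on both sides (using the global unitarity of $\mathcal{F}$ and the reproducing property), and then extension plus pointwise identification. The only cosmetic difference is that you obtain $\hat{f}\in B(c)$ a posteriori from the unitary extension rather than verifying de Branges' membership criteria for $\hat{f}$ directly, which is a legitimate shortcut here.
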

 
\begin{proof}
The proof is (almost) literally the same as the ones of \cite[Theorem~3.2]{je}, \cite[Theorem~3.1]{egnt2}
\end{proof}

It remains to collect several useful properties of these de Branges spaces. 

\begin{corollary}\label{cordBprop}
 The de Branges spaces $B(c)$ have the following properties: 
 \begin{enumerate}[label=\emph{(}\roman*\emph{)}, leftmargin=*, widest=XX]
 \item\label{itdBpropi} The de Branges spaces $B(c)$ are isometrically embedded in $L^2(\R,d\rho)$; 
  \begin{equation}
   \int_\R |F(\lam)|^2 d\rho(\lam) = \|F\|^2_{B(c)}, \quad F\in B(c).
  \end{equation}
 \item\label{itdBpropii} The union of all de Branges spaces $B(c)$ is dense in $L^2(\R,d\rho)$; 
  \begin{equation}\label{eqndBschrdense}
   \overline{\bigcup_{c\in (a,b)} B(c)} = L^2(\R,d\rho).
  \end{equation}
 \item\label{itdBpropiii} The de Branges spaces $B(c)$ are strictly increasing; 
  \begin{equation}
   B(c_1)\subsetneq B(c_2) \quad\text{for }c_1 < c_2.
  \end{equation} 
 \item\label{itdBpropiv} The de Branges spaces $B(c)$ are continuous; 
  \begin{align}\label{eqndBcont}
   \overline{\bigcup_{x\in(a,c)} B(x)} = B(c) = \bigcap_{x\in(c,b)} B(x).
  \end{align}
 \end{enumerate}
\end{corollary}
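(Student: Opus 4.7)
The plan is to deduce all four properties from Theorem~\ref{thmdBschrBT} in combination with the global unitarity of the transformation $\mathcal{F}\colon f\mapsto \hat{f}$ from $L^2((a,b);R(x)dx)$ onto $L^2(\R;d\rho)$ recalled in Section~\ref{s2}. Throughout, I identify $L^2((a,c);R(x)dx)$ with its natural image in $L^2((a,b);R(x)dx)$ via extension by zero.

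For~\ref{itdBpropi}, given $F\in B(c)$, Theorem~\ref{thmdBschrBT} furnishes some $f\in L^2((a,c);R(x)dx)$ with $\hat{f}=F$, whence
\begin{align*}
\|F\|_{B(c)} = \|f\|_{L^2((a,c);R(x)dx)} = \|\mathcal{F}f\|_{L^2(\R;d\rho)} = \|F\|_{L^2(\R;d\rho)}.
\end{align*}
For~\ref{itdBpropii}, Theorem~\ref{thmdBschrBT} identifies $\bigcup_{c\in(a,b)} B(c)$ with the $\mathcal{F}$-image of the subspace of functions in $L^2((a,b);R(x)dx)$ vanishing almost everywhere near $b$; this subspace is dense, so its image under the unitary $\mathcal{F}$ is dense in $L^2(\R;d\rho)$. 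For~\ref{itdBpropiii}, the inclusion $B(c_1)\subseteq B(c_2)$ for $c_1<c_2$ is immediate from extension by zero, and strictness follows because otherwise injectivity of $\mathcal{F}$ would force every element of $L^2((a,c_2);R(x)dx)$ to vanish on $(c_1,c_2)$, contradicting the positive-definiteness of $R$ required by Hypothesis~\ref{hyp1}\,(iii).

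The assertion~\ref{itdBpropiv} requires a bit more care. For the left equality in~\eqref{eqndBcont}, given $F=\hat{f}\in B(c)$ with $f\in L^2((a,c);R(x)dx)$, I would observe that the restrictions $f|_{(a,x)}$ lie in $L^2((a,x);R(x)dx)$ and converge to $f$ as $x\to c^-$, so by the isometry in~\ref{itdBpropi} the corresponding elements of $B(x)$ converge to $F$. For the right equality, given $F\in\bigcap_{x\in(c,b)} B(x)$, I would invoke Theorem~\ref{thmdBschrBT} to choose, for each $x>c$, some $f_x\in L^2((a,x);R(x)dx)$ with $\widehat{f_x}=F$. For $c<x_1<x_2$, both $f_{x_1}$ (extended by zero) and $f_{x_2}$ lie in $L^2((a,x_2);R(x)dx)$ and share the transform $F$, so injectivity of $\mathcal{F}$ forces $f_{x_2}=f_{x_1}$ almost everywhere on $(a,x_2)$, and in particular $f_{x_2}$ must vanish on $(x_1,x_2)$. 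Letting $x_1\downarrow c$ shows $f_{x_2}$ is essentially supported in $(a,c]$, hence lies in $L^2((a,c);R(x)dx)$, and then $F\in B(c)$ by another application of Theorem~\ref{thmdBschrBT}. The main obstacle is precisely this last inclusion, whose subtlety lies in consistently propagating the preimages $f_x$ across all $x>c$ using only the injectivity of $\mathcal{F}$; the other assertions are essentially transcriptions of Theorem~\ref{thmdBschrBT} across different values of $c$.
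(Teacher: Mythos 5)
Your proof is correct and follows essentially the same route as the paper: all four properties are transferred, via the unitarity of $\mathcal{F}$ together with Theorem~\ref{thmdBschrBT}, from the corresponding elementary statements about the nested subspaces $L^2((a,c);R(x)dx)$. The paper merely compresses item~(iv) into the single identity $\overline{\bigcup_{s\in(a,c)} L^2((a,s);R(x)dx)} = L^2((a,c);R(x)dx) = \bigcap_{s\in(c,b)} L^2((a,s);R(x)dx)$, which your more explicit argument with the preimages $f_x$ simply unpacks.
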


\begin{proof}
Items~\emph{\ref{itdBpropi}} and~\emph{\ref{itdBpropii}} are an immediate consequence of Theorem~\ref{thmdBschrBT} and the fact that the transformation from~\eqref{eqnTrans} extends to a unitary map from $L^2((a,b);R(x)dx)$ onto $L^2(\R,d\rho)$. 
The inclusion in \emph{\ref{itdBpropiii}} follows from the similar fact that 
\begin{align*}
 L^2((a,c_1);R(x)dx) \subsetneq L^2((a,c_2);R(x)dx).
\end{align*} 
 In much the same manner, the remaining claim is due to  
 \begin{align*}
  \overline{\bigcup_{s\in(a,c)} L^2((a,s);R(x)dx)} = L^2((a,c);R(x)dx) = \bigcap_{s\in (c,b)} L^2((a,s);R(x)dx).
 \end{align*}
\end{proof}

\section{Inverse uniqueness results}\label{s4}

In order to state our inverse uniqueness results, let $\tau$ and $\tilde{\tau}$ be two differential expressions of the form~\eqref{eqnDiffExp}, both of them satisfying Hypothesis~\ref{hyp1}. 
Furthermore, we will denote with $S$ and $\tilde{S}$ two self-adjoint realizations with separated boundary conditions and suppose Hypothesis~\ref{hyp2} to hold. 
All of the quantities associated with $\tau$ are denoted as in the previous sections and, in obvious notation, the corresponding quantities for $\tilde{\tau}$ are equipped with an additional twiddle. 

\begin{theorem}\label{thmdBuniq}
Suppose that the function 
\begin{equation}\label{eqnquotE1E2}
   E(z,c)\tilde{E}(z,\tilde{c})^{-1}, \quad z\in\C^+
\end{equation}
is of bounded type for some $c\in (a,b)$ and $\tilde{c}\in (\tilde{a},\tilde{b})$. 
If $\rho = \tilde{\rho}$, then there is a locally absolutely continuous bijection $\eta$ from $(a,b)$ onto $(\tilde{a},\tilde{b})$ and a locally absolutely continuous function $\Gamma:(a,b)\rightarrow\R^{2\times 2}$ with $\det(\Gamma)=1$ such that 
\begin{align}\label{eqnUniqRel}
 \eta' \tilde{R}\circ\eta  & = \Gamma^\ast R\, \Gamma, & 
 \eta' \tilde{Q}\circ\eta  & = \Gamma^\ast Q\, \Gamma + \Gamma^\ast J\, \Gamma'.
\end{align}
Moreover, the operator $U$ from $L^2((\tilde{a},\tilde{b});\tilde{R}(x)dx)$ to $L^2((a,b);R(x)dx)$, given by 
\begin{align}
 U \tilde{f}(x) = \Gamma(x) \tilde{f}(\eta(x)), \quad x\in(a,b), 
\end{align}
is unitary with $S = U \tilde{S} U^{-1}$.
\end{theorem}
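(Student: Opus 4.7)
The plan is to apply de Branges' subspace ordering theorem to the two chains of de Branges spaces $\{B(c)\}_{c\in(a,b)}$ and $\{\tilde B(\tilde c)\}_{\tilde c\in(\tilde a,\tilde b)}$, viewed as subspaces of the common Hilbert space $L^2(\R,d\rho)=L^2(\R,d\tilde\rho)$ (using $\rho=\tilde\rho$), to extract a bijection $\eta$ matching the two chains, and then to read off $\Gamma$ from the resulting equality of reproducing kernels.

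\textbf{Step 1: Ordering the chains.} The first task is to propagate the bounded-type hypothesis: because $E(\,\cdot\,,c)/E(\,\cdot\,,c')$ is of bounded type for any $c,c'\in(a,b)$ (and likewise on the tilde side, since both de Branges functions belong to the same chain of Hermite--Biehler functions), the assumption at one pair $(c,\tilde c)$ propagates to every pair. De Branges' subspace ordering theorem then implies, for every $c\in(a,b)$ and $\tilde c\in(\tilde a,\tilde b)$, either $B(c)\subseteq\tilde B(\tilde c)$ or $\tilde B(\tilde c)\subseteq B(c)$ isometrically inside $L^2(\R,d\rho)$. Using the strict monotonicity and continuity of both chains (Corollary~\ref{cordBprop}\,\emph{\ref{itdBpropiii}},\,\emph{\ref{itdBpropiv}}), I would define
\begin{align*}
\eta(c)=\sup\{\tilde c\in(\tilde a,\tilde b):\tilde B(\tilde c)\subseteq B(c)\}=\inf\{\tilde c\in(\tilde a,\tilde b):B(c)\subseteq\tilde B(\tilde c)\},
\end{align*}
so that $B(c)=\tilde B(\eta(c))$ isometrically. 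Density of both chains in $L^2(\R,d\rho)$ (Corollary~\ref{cordBprop}\,\emph{\ref{itdBpropii}}) forces $\eta$ to be a bijection of $(a,b)$ onto $(\tilde a,\tilde b)$; continuity of both chains makes $\eta$ continuous and strictly increasing.

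\textbf{Step 2: Reproducing kernels and extraction of $\Gamma$.} Equality of the de Branges spaces with matching norms forces equality of their reproducing kernels: for all $\zeta,z\in\C$ and every $c\in(a,b)$,
\begin{align*}
\int_a^c\spr{\Phi(\zeta,x)}{R(x)\Phi(z,x)}\,dx=\int_{\tilde a}^{\eta(c)}\spr{\tilde\Phi(\zeta,y)}{\tilde R(y)\tilde\Phi(z,y)}\,dy.
\end{align*}
Choosing suitable $\zeta,z$ gives local absolute continuity of $\eta$ with $\eta'>0$ a.e. Differentiating in $c$ yields, for a.e.\ $c$,
\begin{align*}
\spr{\Phi(\zeta,c)}{R(c)\Phi(z,c)}=\eta'(c)\,\spr{\tilde\Phi(\zeta,\eta(c))}{\tilde R(\eta(c))\tilde\Phi(z,\eta(c))}.
\end{align*}
Fixing two values $z_1,z_2$ for which $\tilde\Phi(z_1,\eta(c)),\tilde\Phi(z_2,\eta(c))$ are linearly independent in $\C^2$ and letting $\zeta$ vary, the displayed identity determines a unique matrix $\Gamma(c)\in\R^{2\times 2}$ with $\Phi(z,c)=\Gamma(c)\tilde\Phi(z,\eta(c))$ for every $z\in\C$ and a.e.\ $c\in(a,b)$. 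Local absolute continuity of $\Phi$ and $\tilde\Phi$ then transfers to $\Gamma$.

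\textbf{Step 3: Transformation formulas, $\det\Gamma=1$, unitarity.} Substituting $\Phi(z,x)=\Gamma(x)\tilde\Phi(z,\eta(x))$ into $J\Phi'+Q\Phi=zR\Phi$ and using $J\tilde\Phi'+\tilde Q\tilde\Phi=z\tilde R\tilde\Phi$, then separating the coefficient of $z$ from the rest, yields exactly the two transformation identities~\eqref{eqnUniqRel}. For $\det\Gamma$, the matrix identity $[\Phi(z_1,x),\Phi(z_2,x)]=\Gamma(x)[\tilde\Phi(z_1,\eta(x)),\tilde\Phi(z_2,\eta(x))]$ gives $W_x(\Phi(z_1),\Phi(z_2))=\det\Gamma(x)\cdot W_{\eta(x)}(\tilde\Phi(z_1),\tilde\Phi(z_2))$; on the other hand, the Lagrange identity~\eqref{eqnLagrange} combined with the reproducing-kernel equality of Step~2 shows that these two Wronskians already coincide (their $x$-derivatives agree, and both share the common limit $0$ as $x\to a$ since $\Phi,\tilde\Phi\in\dom{S},\dom{\tilde S}$ near the left endpoint), forcing $\det\Gamma(x)=1$. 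Finally, the operator $U$ defined by $(U\tilde f)(x)=\Gamma(x)\tilde f(\eta(x))$ is precisely the Liouville transform discussed at the end of Section~\ref{s2}, hence unitary and intertwining $S=U\tilde S U^{-1}$.

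\textbf{Main obstacle.} The crux is Step~1: applying de Branges' subspace ordering theorem across the two chains. The bounded-type assumption is what rules out ``incomparable'' de Branges subspaces inside $L^2(\R,d\rho)$, and propagating it from one pair $(c,\tilde c)$ to all pairs, together with using the continuity and density statements of Corollary~\ref{cordBprop} to turn the ordering into a genuine bijective $\eta$, is the most delicate part. The rest of the argument is then a computational extraction of $\Gamma$ from the reproducing-kernel equality, essentially parallel to the Sturm--Liouville proofs in~\cite{je,egnt2}.
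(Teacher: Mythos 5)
Your proposal follows the same overall route as the paper: propagate the bounded-type hypothesis along both chains, apply de Branges' subspace ordering theorem \cite[Theorem~35]{dBbook} inside $L^2(\R,d\rho)$, define $\eta$ by matching the two chains of de Branges spaces, and extract $\Gamma$ from the coincidence $B(x)=\tilde{B}(\eta(x))$. Two sub-steps differ from the paper and need care. First, in Step~1 you justify $\eta(x)\in(\tilde{a},\tilde{b})$ by density alone; density rules out the case $\tilde{B}(\tilde{x})\subseteq B(x)$ for all $\tilde{x}$ (which would make $B(x)$ dense), but excluding the other degenerate case $B(x)\subseteq\tilde{B}(\tilde{x})$ for all $\tilde{x}$ requires knowing that $\bigcap_{\tilde{x}}\tilde{B}(\tilde{x})=\lbrace 0\rbrace$, which the paper gets from the decay $\tilde{K}(\zeta,\zeta,\tilde{x})\rightarrow 0$ as $\tilde{x}\rightarrow\tilde{a}$ via \eqref{eqndBschrRepKer} --- a small but genuine omission. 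Second, where the paper obtains \eqref{eqnGamma} together with $\det(\Gamma)=1$ in one stroke from de Branges' theorem on isometrically equal de Branges spaces \cite[Theorem~I]{dB60}, you reconstruct $\Gamma$ by hand from the a.e.-differentiated kernel identity and prove $\det(\Gamma)=1$ separately via the Lagrange identity. This is workable: positive definiteness of $R$ makes the map $\tilde{\Phi}(z,\eta(c))\mapsto\Phi(z,c)$ well defined and linear, and your Wronskian argument (using $W_a(\Phi(z_1,\cdot\,),\Phi(z_2,\cdot\,))=0$ and \eqref{eqnTilPhi}) does pin down the determinant. But your construction yields \eqref{eqnGamma} only for almost every $c$ at first, so it must be extended to all $c$ by continuity (e.g.\ via the explicit matrix formula for $\Gamma$ that the paper writes down) before one may differentiate it; moreover, ``separating the coefficient of $z$'' in the substituted differential equation produces a matrix identity involving $\det(\Gamma)$, so the determinant computation should logically precede that step. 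All of this is repairable, and the architecture is otherwise exactly the paper's.
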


\begin{proof}
First of all note that because of the definition of de Branges spaces, the function in~\eqref{eqnquotE1E2} is of bounded type for all $c\in (a,b)$ and $\tilde{c}\in (\tilde{a},\tilde{b})$. 
Now fix some arbitrary point $x\in (a,b)$. 
Since for every $\tilde{x}\in (\tilde{a},\tilde{b})$, both spaces $B(x)$ and $\tilde{B}(\tilde{x})$ are isometrically embedded in $L^2(\R,d\rho)$, we infer from the subspace ordering theorem \cite[Theorem~35]{dBbook} that $B(x)$ is contained in $\tilde{B}(\tilde{x})$ or $\tilde{B}(\tilde{x})$ is contained in $B(x)$. 
 We claim that the infimum $\eta(x)$ of all $\tilde{x}\in (\tilde{a},\tilde{b})$ such that $B(x)\subseteq \tilde{B}(\tilde{x})$ lies in $(\tilde{a},\tilde{b})$.
 In fact, otherwise we either had $\tilde{B}(\tilde{x})\subseteq B(x)$ for all $\tilde{x}\in (\tilde{a},\tilde{b})$ or $B(x)\subseteq \tilde{B}(\tilde{x})$ for all $\tilde{x}\in (\tilde{a},\tilde{b})$.
 The first case would imply that $B(x)$ is dense in $L^2(\R,d\rho)$, which is not possible in view of Corollary~\ref{cordBprop}~\emph{\ref{itdBpropii}}.
 In the second case, this would mean that for every function $F\in B(x)$ and $\zeta\in\C$ we have
 \begin{align*}
 \begin{split}
  |F(\zeta)|^2 & \leq \left| \dbspr{\tilde{K}(\zeta,\cdot\,,\tilde{x})}{F}_{\tilde{B}(\tilde{x})} \right|^2
              \leq \|F\|_{\tilde{B}(\tilde{x})}^2 \dbspr{\tilde{K}(\zeta,\cdot\,,\tilde{x})}{\tilde{K}(\zeta,\cdot\,,\tilde{x})}_{\tilde{B}(\tilde{x})} \\
             & = \|F\|_{B(x)}^2 \tilde{K}(\zeta,\zeta,\tilde{x})
 \end{split}
 \end{align*}
 for every $\tilde{x}\in (\tilde{a},\tilde{b})$.
 But since $\tilde{K}(\zeta,\zeta,\tilde{x})\rightarrow0$ as $\tilde{x}\rightarrow \tilde{a}$ by~\eqref{eqndBschrRepKer}, this would imply $B(x)=\lbrace 0\rbrace$ in contradiction to Theorem~\ref{thmdBschrBT}.
 Now from~\eqref{eqndBcont} we infer that 
 \begin{align*}
   \tilde{B}(\eta(x)) = \overline{\bigcup_{\tilde{x}<\eta(x)} \tilde{B}(\tilde{x})} \subseteq B(x) \subseteq \bigcap_{\tilde{x}>\eta(x)} \tilde{B}(\tilde{x}) = \tilde{B}(\eta(x))
 \end{align*}
 and hence $B(x)=\tilde{B}(\eta(x))$, including norms. 
 In particular, from this we infer that there is a matrix $\Gamma(x)\in\R^{2\times 2}$ with $\det(\Gamma(x))=1$ such that
 \begin{align}\label{eqnGamma}
  \Phi(z,x) = \Gamma(x) \tilde{\Phi}(z,\eta(x)), \quad z\in\C,
 \end{align}
 in view of \cite[Theorem~I]{dB60}. 
  
 The function $\eta: (a,b) \rightarrow (\tilde{a},\tilde{b})$ defined above is strictly increasing and continuous by Corollary~\ref{cordBprop}.
 Moreover, since for every  $\zeta\in\C$ we have 
  \begin{align*}
    \tilde{K}(\zeta,\zeta,\eta(x)) = K(\zeta,\zeta,x) \rightarrow 0,
  \end{align*}
  as $x\rightarrow a$, we infer that $\eta(x)\rightarrow \tilde{a}$ as $x\rightarrow a$.
  Furthermore, the function $\eta$ even has to be a bijection because of~\eqref{eqndBschrdense}.  
 Next, since the reproducing kernels are given by~\eqref{eqndBschrRepKer}, we get for each $z\in\C$  
 \begin{align*}
  \int_{a}^{x} \spr{\Phi(z,s)}{R(s)\Phi(z,s)} ds = \int_{\tilde{a}}^{\eta(x)} \spr{\tilde{\Phi}(z,s)}{\tilde{R}(s) \tilde{\Phi}(z,s)} ds, \quad x\in (a,b).
 \end{align*}
 Thus  $\eta$ turns out to be locally absolutely continuous in view of \cite[Chapter~IX; Exercise~13]{na55} and \cite[Chapter~IX; Theorem~3.5]{na55} with 
 \begin{align}\label{eqnEtaP}
  \spr{\Phi(z,x)}{R(x)\Phi(z,x)} = \eta'(x) \spr{\tilde{\Phi}(z,\eta(x))}{\tilde{R}(\eta(x))\tilde{\Phi}(z,\eta(x))}
 \end{align}
 for almost all $x\in(a,b)$. 
 In order to prove that $\Gamma$ is locally absolutely continuous as well, fix some $z\in\C\backslash\R$ and note that from the Lagrange identity~\eqref{eqnLagrange} one gets 
 \begin{align}\label{eqnTilPhi}
 \begin{split}
  \tilde{\Phi}_1(z,\eta(x)) \tilde{\Phi}_2(z^\ast,\eta(x)) & - \tilde{\Phi}_2(z,\eta(x) \tilde{\Phi}_1(z^\ast,\eta(x)) \\
     & = 2\I\, \im(z) \int_{\tilde{a}}^{\eta(x)} \spr{\tilde{\Phi}(z,s)}{R(s)\tilde{\Phi}(z,s)}ds \not= 0
 \end{split}
 \end{align}
 for each $x\in(a,b)$. 
 Then from~\eqref{eqnGamma} we infer that 
 \begin{align*}
  \Gamma(x) =  \begin{pmatrix}  \Phi_1(z,x) & \Phi_1(z^\ast,x) \\ \Phi_2(z,x) & \Phi_2(z^\ast,x)  \end{pmatrix} 
                     \begin{pmatrix}  \tilde{\Phi}_1(z,\eta(x)) & \tilde{\Phi}_1(z^\ast,\eta(x)) \\ \tilde{\Phi}_2(z,\eta(x)) & \tilde{\Phi}_2(z^\ast,\eta(x))  \end{pmatrix}^{-1}, \quad x\in(a,b).     
 \end{align*} 
 Since the right-hand side 
 is locally absolutely continuous by \cite[Chapter~IX; Section~1]{na55}, so is $\Gamma$.  
 
  Next, we will show that the relations in~\eqref{eqnUniqRel} hold. 
  To this end, one plugs~\eqref{eqnGamma} into~\eqref{eqnEtaP} and notes that for $z\in\C\backslash\R$, the vectors $\tilde{\Phi}(z,\eta(x))$ and $\tilde{\Phi}(z^\ast,\eta(x))$ are linearly independent by~\eqref{eqnTilPhi}, which then shows that the first relation in~\eqref{eqnUniqRel} holds almost everywhere. 
  Subsequently, one differentiates~\eqref{eqnGamma} to obtain the second relation in~\eqref{eqnUniqRel} almost everywhere.

 Finally, consider the unitary operator $\mathcal{F}^{-1} \tilde{\mathcal{F}}$ which maps $L^2((\tilde{a},\tilde{b});\tilde{R}(x)dx)$ onto $L^2((a,b);R(x)dx)$ and satisfies $S = \mathcal{F}^{-1} \tilde{\mathcal{F}} \tilde{S} \tilde{\mathcal{F}}^{-1} \mathcal{F}$. 
 In order to identify this operator with $U$, one simply observes that for every $F\in L^2(\R;d\rho)$ with compact support 
 \begin{align*}
  \mathcal{F}^{-1} F(x) = \Gamma(x) \int_\R \tilde{\Phi}(\lambda,\eta(x)) F(\lambda) d\rho(\lambda) = \Gamma(x) \tilde{\mathcal{F}}^{-1} F(\eta(x)),
 \end{align*}
 for almost all $x\in(a,b)$, which finishes the proof.
\end{proof}

 In view of applications, the condition on~\eqref{eqnquotE1E2} being of bounded type is somewhat inconvenient. 
 However, let us mention that this condition always holds if the solutions $\Phi$ and $\tilde{\Phi}$ satisfy certain growth conditions. 
 Therefore recall that some entire function $F$ belongs to the Cartwright class if it is of finite exponential type and the logarithmic integral
 \begin{align}
  \int_\R \frac{\ln^+|F(\lam)|}{1+\lam^2}d\lam  
 \end{align}
is finite, where $\ln^+$ is the positive part of the natural logarithm. 
 Now if it is known that the functions $E(\,\cdot\,,c)$ and $\tilde{E}(\,\cdot\,,\tilde{c})$ belong to the Cartwright class for some $c\in(a,b)$ and $\tilde{c}\in (\tilde{a},\tilde{b})$, then a theorem of Krein~\cite[Theorem~6.17]{rosrov}, \cite[Section~16.1]{lev} guarantees that they are of bounded type in $\C^+$.
 Since the quotient of two functions of bounded type is of bounded type itself, this immediately implies that~\eqref{eqnquotE1E2} is of bounded type as well.
 In particular, we will see in the next section that it is always possible to choose the real entire solutions $\Phi$ and $\tilde{\Phi}$ such that this holds provided that the limit circle case prevails at the left endpoint.
 
 In general, one can not deduce more information of the Dirac operator from the spectral measure than claimed in Theorem~\ref{thmdBuniq}.  
 However, if one has further a priori information on the differential expression, then it is possible indeed to strengthen the results.  
 For example, in the case of Dirac operators with trivial weight matrices we get the following refinement of Theorem~\ref{thmdBuniq}.  
 
 \begin{corollary}\label{corIUI}
 Suppose that $R=\tilde{R}=I$ and that the function 
 \begin{equation}\label{eq:Ebt}
   E(z,c) \tilde{E}(z,\tilde{c})^{-1}, \quad z\in\C^+
 \end{equation}
 is of bounded type for some $c\in (a,b)$ and $\tilde{c}\in (\tilde{a},\tilde{b})$. 
 If $\rho = \tilde{\rho}$, then there is an $\eta_0\in\R$ and a locally absolutely continuous real-valued function $\varphi$ on $(a,b)$ such that 
 \begin{align}\label{eqnUniqRelPot}
  \tilde{Q}(\eta_0 + x) & = \E^{-\varphi(x)J} Q(x) \E^{\varphi(x) J}  - \varphi'(x) I
 \end{align}
 for almost all $x\in(a,b)$. 
 Moreover, the operator $U$ from $L^2((\tilde{a},\tilde{b});Idx)$ to $L^2((a,b);Idx)$ given by 
 \begin{align}
  U\tilde{f}(x) = \E^{\varphi(x) J} \tilde{f}(\eta_0 + x), \quad x\in(a,b),
 \end{align}
is unitary with $S = U \tilde{S} U^{-1}$.
\end{corollary}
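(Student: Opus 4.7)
The plan is to apply Theorem~\ref{thmdBuniq} directly and then exploit the very rigid structure that the assumption $R = \tilde{R} = I$ forces on the resulting pair $(\eta,\Gamma)$.

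First I would invoke Theorem~\ref{thmdBuniq} to obtain a locally absolutely continuous bijection $\eta:(a,b)\to(\tilde{a},\tilde{b})$ and a locally absolutely continuous $\Gamma:(a,b)\to\R^{2\times 2}$ with $\det\Gamma=1$ such that the relations~\eqref{eqnUniqRel} hold. Substituting $R=\tilde R=I$ into the first of these relations gives $\Gamma(x)^{\ast}\Gamma(x) = \eta'(x) I$ almost everywhere, and taking determinants yields $\eta'(x)^2 = 1$. Because $\eta$ is increasing, $\eta'\equiv 1$ a.e., so $\eta(x)=x+\eta_0$ for some $\eta_0\in\R$, and consequently $\Gamma(x)\in SO(2)$ for a.e.~$x$; by continuity this holds everywhere on $(a,b)$.

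Next I would lift the continuous $SO(2)$-valued function $\Gamma$ to a real-valued angle function $\varphi$ on $(a,b)$ via $\Gamma(x)=\E^{\varphi(x)J}$, using the standard covering-space argument (the interval $(a,b)$ is simply connected, so a continuous lift exists and is unique up to addition of $2\pi k$). Local absolute continuity of $\varphi$ follows from local absolute continuity of the entries of $\Gamma$, since on any subinterval where (say) $\cos\varphi$ stays bounded away from zero, $\varphi$ is obtained from an entry of $\Gamma$ by composition with a smooth inverse trigonometric function, and such intervals cover $(a,b)$.

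Finally, with these identifications I would verify the second relation in~\eqref{eqnUniqRel}. Since $J$ commutes with $\E^{\varphi J}$, differentiation gives $\Gamma' = \varphi' J\,\Gamma$, and therefore
\begin{align*}
\Gamma^{\ast} J\,\Gamma' = \varphi'\,\Gamma^{\ast} J^{2}\Gamma = -\varphi'\,\Gamma^{\ast}\Gamma = -\varphi' I.
\end{align*}
Substituting this and $\eta'=1$ into $\eta'\tilde Q\circ\eta = \Gamma^{\ast}Q\,\Gamma+\Gamma^{\ast}J\,\Gamma'$ yields precisely~\eqref{eqnUniqRelPot}. The unitarity of $U$ and the intertwining $S=U\tilde S U^{-1}$ are then immediate from the corresponding assertion in Theorem~\ref{thmdBuniq}, since $U$ is exactly the operator produced there for this specific choice of $(\eta,\Gamma)$. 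The only mildly delicate step is the lifting of $\Gamma$ through the exponential map while preserving local absolute continuity; everything else is a short algebraic reduction.
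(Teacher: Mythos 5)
Your argument is correct and follows essentially the same route as the paper's own proof: apply Theorem~\ref{thmdBuniq}, deduce $\Gamma^{\ast}\Gamma=\eta' I$ from $R=\tilde R=I$, use $\det\Gamma=1$ to get $\eta'=1$ and $\Gamma\in SO(2)$, write $\Gamma=\E^{\varphi J}$ with $\varphi$ locally absolutely continuous, and evaluate the second relation in~\eqref{eqnUniqRel}. You merely spell out the lifting and the computation $\Gamma^{\ast}J\,\Gamma'=-\varphi' I$ that the paper leaves implicit.
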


\begin{proof}
 Employing Theorem~\ref{thmdBuniq}, we infer from our additional assumptions that 
 \begin{align*}
  \Gamma(x)^\ast \Gamma(x) = \eta'(x) I, \quad x\in (a,b). 
 \end{align*}
 Taking $\det(\Gamma)=1$ into account, we conclude that $\eta(x) = \eta_0 + x$ for some $\eta_0 \in\R$. 
 In particular,  this shows that $\Gamma(x)$ is unitary for each $x\in(a,b)$ and we may write 
 \begin{align*}
  \Gamma(x) = \begin{pmatrix} \cos\varphi(x) & -\sin\varphi(x) \\  \sin\varphi(x) & \cos\varphi(x) \end{pmatrix} = \E^{\varphi(x)J}, \quad x\in(a,b),
 \end{align*}
 for some real-valued function $\varphi$ on $(a,b)$. 
 Since the entries of $\Gamma$ are locally absolutely continuous, we infer that $\varphi$ may be chosen locally absolutely continuous as well.  
 Now evaluating the second equation in~\eqref{eqnUniqRel} establishes the claim. 
\end{proof}

If, in addition to the assumptions of Corollary~\ref{corIUI}, we furthermore require that the traces of the potentials $Q$ and $\tilde{Q}$ are equal almost everywhere (e.g.\ if both are normalized to the same constant), then we may conclude from~\eqref{eqnUniqRelPot} that the function $\varphi$ has to be a real constant. 
Under various additional a priori assumptions (for example, prescribing the boundary condition at a regular left endpoint), it is furthermore even possible to determine this constant (at least modulo $2\pi$).

As an application of Corollary \ref{corIUI} which is of particular physical interest we would like to single out 
the case of radial Dirac operators. Namely, we may consider differential expressions of the form 
\begin{equation}
\label{eq:radexprB}
 \tau f(x) = J f'(x) + \begin{pmatrix}q_{\rm sc}(x) & \frac{\kappa}{x}+q_{\rm am}(x) \\
\frac{\kappa}{x}+q_{\rm am}(x)  &-q_{\rm sc}(x)\end{pmatrix} f(x), \quad x\in(0,\infty), 
\end{equation}
with $q_{\rm sc}$, $q_{\rm am} \in L^1_{loc}([0,\infty),\R)$ and $\kappa\geq0$. In the case $\kappa=\frac{1}{2}$ we assume in addition 
\begin{align}
  \int_0^c \left(|q_{\rm sc}(x)| + |q_{\rm am}(x)|\right) |\log(x)|  dx < \infty 
\end{align}
for some $c\in(0,\infty)$. 
Here, $q_{\rm sc}$ and $q_{\rm am}$ are interpreted as a scalar potential and an anomalous magnetic moment, respectively (see \cite[Chapter~4]{th}). 
For simplicity we assume that the electrostatic potential is zero which can always be achieved by a suitable gauge transform. 
Moreover, the case $\kappa<0$ can be reduced to the case $\kappa>0$ by performing a Liouville transform with $\Gamma = J$. 
If $\kappa> 0$, then this differential expression is singular at the left endpoint as $\frac{\kappa}{x}$ is not integrable there and it is also singular at $\infty$ because the endpoint itself is not finite. Note that $\tau$ is always limit point at $\infty$ (see \cite[Theorem 8.6.1]{ls} and \cite[Theorem 6.8]{wdl}). 
We also refer to the monographs \cite{ls}, \cite{wdl} for background and also to \cite{th} for further information about Dirac operators and their applications.

Under the above assumptions, there is a particular real entire solution $\Phi$ associated with the differential expression~\eqref{eq:radexprB} which satisfies the spatial asymptotics  
\begin{align}\label{eq:phi0}
\Phi_1(z,x) & =\oo(x^\kappa), & \Phi_2(z,x) & =x^\kappa(1+\oo(1)),
\end{align}
as $x\to 0$ (see \cite{ser} in the case $\kappa\in\N_0$ and \cite[Section~8]{bekt} for the general case). 
In the case $0\le \kappa < \frac{1}{2}$, in which the differential expression is in the limit circle case at the left endpoint, this implies that we impose the boundary condition 
\begin{align}\label{eqnRDBC}
 \lim_{x\to0} x^\kappa f_1(x)=0.
\end{align}

\begin{theorem}\label{th:DiracUn}
Suppose that $\tau$, $\tilde{\tau}$ are differential expressions of the form~\eqref{eq:radexprB}  and that the boundary conditions of $S$, $\tilde{S}$ at the left endpoint are given by~\eqref{eqnRDBC} if the limit circle case prevails there. 
Furthermore, let $\Phi$, $\tilde{\Phi}$ be the real entire solutions which satisfy~\eqref{eq:phi0} and let $\rho$, $\tilde{\rho}$ be the corresponding spectral measures. 
If $\rho=\tilde{\rho}$, then $\kappa=\tilde{\kappa}$ and $q_{\rm sc}=\tilde{q}_{\rm sc}$, $q_{\rm am}=\tilde{q}_{\rm am}$ almost everywhere, as well as $S=\tilde{S}$. 
\end{theorem}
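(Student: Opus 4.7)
The plan is to invoke Corollary~\ref{corIUI} and then use the prescribed behaviour of $\Phi$ at the origin to rigidify the gauge. First I would verify the hypotheses. The weight matrices are trivial ($R=\tilde R=I$) by the form of~\eqref{eq:radexprB}. To check that $E(z,c)\tilde E(z,\tilde c)^{-1}$ is of bounded type in $\C^+$, I would appeal to the Frobenius-type construction of $\Phi$ from \cite{bekt} (and \cite{ser} when $\kappa\in\N_0$), which produces $\Phi(z,x)$ as an entire function of $z$ of exponential type, locally uniformly in $x$, with polynomial growth along $\R$. From~\eqref{eq:3.1}, $E(\,\cdot\,,c)$ is then of finite exponential type with finite logarithmic integral on $\R$; it therefore belongs to the Cartwright class, and the same holds for $\tilde E(\,\cdot\,,\tilde c)$. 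Krein's theorem, cited in the discussion after Corollary~\ref{corIUI}, yields that both are of bounded type in $\C^+$, and so is their quotient.

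Corollary~\ref{corIUI} then supplies an $\eta_0\in\R$ and a locally absolutely continuous real-valued $\varphi$ on $(0,\infty)$ satisfying
\begin{equation*}
\tilde Q(\eta_0+x) = \E^{-\varphi(x)J} Q(x)\, \E^{\varphi(x)J} - \varphi'(x) I
\end{equation*}
almost everywhere, together with the intertwining unitary $U\tilde f(x) = \E^{\varphi(x)J}\tilde f(\eta_0+x)$. Since $\eta(x)=\eta_0+x$ must be a bijection of $(0,\infty)$ onto itself, $\eta_0=0$.

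The crucial step is to identify $\kappa=\tilde\kappa$ and to pin down $\varphi(0^+)$. Transporting $U$ to the distinguished real entire solutions yields $\Phi(z,x)=\E^{\varphi(x)J}\tilde\Phi(z,x)$ on $(0,\infty)$. Inserting~\eqref{eq:phi0} into the two components and matching leading powers as $x\to 0$ gives
\begin{align*}
\cos(\varphi(x))\, x^{\tilde\kappa}(1+\oo(1)) &= x^\kappa(1+\oo(1)), \\
\sin(\varphi(x))\, x^{\tilde\kappa}(1+\oo(1)) &= \oo(x^\kappa).
\end{align*}
If $\tilde\kappa>\kappa$, the first line would force $|\cos\varphi(x)|\to\infty$; if $\tilde\kappa<\kappa$, the first line forces $\cos\varphi(x)\to 0$ while the second forces $\sin\varphi(x)\to 0$, contradicting $\sin^2+\cos^2=1$. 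Hence $\kappa=\tilde\kappa$, and then $\cos\varphi(x)\to 1$ and $\sin\varphi(x)\to 0$ as $x\to 0$.

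Finally I take traces. Both $Q$ and $\tilde Q$ are pointwise traceless by~\eqref{eq:radexprB}, and conjugation by $\E^{\varphi J}$ preserves trace, so the trace of the gauge identity reduces to $2\varphi'\equiv 0$ a.e. Thus $\varphi$ is constant, and combined with $\E^{\I\varphi(x)}\to 1$ as $x\to 0$ this forces $\varphi\equiv 2\pi k$ for some $k\in\Z$, whence $\E^{\varphi J}=I$. Therefore $\tilde Q=Q$ a.e., and reading off matrix entries together with $\kappa=\tilde\kappa$ yields $q_{\rm sc}=\tilde q_{\rm sc}$ and $q_{\rm am}=\tilde q_{\rm am}$; the operator $U$ is then the identity, so $S=\tilde S$. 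The main obstacle I expect is the bounded-type verification in the limit point range $\kappa\geq\tfrac12$, where the singular Frobenius construction of $\Phi$ together with its exponential-type bounds in $z$ has to be invoked carefully; once that input is in hand, the remainder is the asymptotic matching and the trace computation.
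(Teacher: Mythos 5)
Your proposal is correct and follows the paper's proof almost step for step: Cartwright class of $E(\,\cdot\,,c)$, $\tilde E(\,\cdot\,,\tilde c)$ gives bounded type of the quotient, Corollary~\ref{corIUI} applies, $\eta_0=0$ because both intervals are $(0,\infty)$, and tracelessness of $Q$, $\tilde Q$ forces $\varphi'=0$ a.e. The one genuine divergence is how you kill the residual rotation: the paper argues from the structure of the potential matrices (a constant rotation by $\varphi\notin\pi\Z$ would move part of the non-integrable $\frac{\kappa}{x}$ term into the scalar-potential slot, and $\kappa,\tilde\kappa\ge0$ then excludes $\varphi\in\pi+2\pi\Z$), concluding $\Gamma=\pm I$, whereas you match the $x\to0$ asymptotics~\eqref{eq:phi0} through the relation $\Phi(z,x)=\E^{\varphi(x)J}\tilde\Phi(z,x)$ to get $\kappa=\tilde\kappa$ and $\cos\varphi(x)\to1$, $\sin\varphi(x)\to0$ directly. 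Your variant has two small advantages: it pins down $\Gamma=+I$ (not merely $\pm I$) and it covers the case $\kappa=\tilde\kappa=0$ uniformly, where the potential-matrix structure alone carries no information about $\varphi$. Two caveats: the identity $\Phi(z,x)=\Gamma(x)\tilde\Phi(z,\eta(x))$ is not part of the statement of Corollary~\ref{corIUI} but is~\eqref{eqnGamma} from the proof of Theorem~\ref{thmdBuniq}, so you should cite it explicitly; and the Cartwright-class input for general $\kappa\ge0$ (including the limit point range $\kappa\ge\frac12$ that you flag as the main obstacle) is exactly what \cite[Lemma~8.3]{bekt} supplies, so the paper simply quotes it rather than re-deriving exponential-type bounds from the Frobenius construction.
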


\begin{proof}
It follows from \cite[Lemma~8.3]{bekt} that the solutions $\Phi(\,\cdot\,,x)$ and $\tilde{\Phi}(\,\cdot\,,x)$ belong to the Cartwright class for every fixed $x>0$. 
Therefore, so do the functions $E(\,\cdot\,,x)$ and $\tilde{E}(\cdot,x)$ defined by \eqref{eq:3.1} and hence the function \eqref{eq:Ebt} is of bounded type. Thus we may apply Corollary~\ref{corIUI} and infer that $\eta_0=0$ because the left endpoints are the same. 
 Moreover, since the traces of the potential matrices are zero almost everywhere,  the function $\varphi$ is constant. 
 However, the particular form of the potential matrices immediately implies that $\sin\varphi=0$ (also taking into account that $\kappa$, $\tilde{\kappa}\geq0$). 
 Therefore, $\Gamma=\pm I$ and we infer from~\eqref{eqnUniqRelPot} that 
 \begin{align*}
  \begin{pmatrix}q_{\rm sc}(x) & \frac{\kappa}{x}+q_{\rm am}(x) \\
\frac{\kappa}{x}+q_{\rm am}(x)  &-q_{\rm sc}(x)\end{pmatrix} = \begin{pmatrix} \tilde{q}_{\rm sc}(x) & \frac{\tilde{\kappa}}{x}+ \tilde{q}_{\rm am}(x) \\
\frac{\tilde{\kappa}}{x}+ \tilde{q}_{\rm am}(x)  &- \tilde{q}_{\rm sc}(x)\end{pmatrix}
 \end{align*}
 for almost all $x\in(0,\infty)$, which finishes the proof.  
\end{proof}

Clearly, in view of \eqref{eqnSMdef}, we can rephrase this as the following result.

\begin{corollary}
Suppose that $\tau$, $\tilde{\tau}$ are differential expressions of the form~\eqref{eq:radexprB} and that the boundary conditions of $S$, $\tilde{S}$ at the left endpoint are given by~\eqref{eqnRDBC} if the limit circle case prevails there. 
Furthermore, let $\Phi$, $\tilde{\Phi}$ be real entire solutions which satisfy~\eqref{eq:phi0} and let $M$, $\tilde{M}$ be corresponding singular Weyl--Titchmarsh--Kodaira functions. 
If the function $M -\tilde{M}$ is entire, then $\kappa=\tilde{\kappa}$ and $q_{\rm sc}=\tilde{q}_{\rm sc}$, $q_{\rm am}=\tilde{q}_{\rm am}$ almost everywhere, as well as $S=\tilde{S}$. 
\end{corollary}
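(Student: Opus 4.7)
The plan is to reduce this corollary to Theorem~\ref{th:DiracUn} by showing that the hypothesis on $M-\tilde{M}$ forces the two spectral measures $\rho$ and $\tilde\rho$ to coincide. Once we have $\rho=\tilde\rho$, the corollary is immediate.

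The first step is to observe that the singular Weyl--Titchmarsh--Kodaira function $M$ defined via \eqref{eq:m_sing} satisfies the Hermitian symmetry $M(z^\ast)=M(z)^\ast$ on $\C\setminus\R$, since $\Theta$ and $\Phi$ are real entire solutions and $\Psi$ in \eqref{eq:m_sing} is characterized by a real-symmetric condition at $b$. The same holds for $\tilde{M}$, so the difference $D(z):=M(z)-\tilde{M}(z)$, originally defined on $\C\setminus\R$, inherits this symmetry there.

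Next, under the hypothesis that $D$ extends to an entire function, this symmetry persists by continuity on all of $\C$; that is, $D$ is a real entire function. In particular, $D(\lambda)\in\R$ for every $\lambda\in\R$, so $\im D(\lambda)=0$, and moreover $\im D(\lambda-\I\varepsilon)\to 0$ pointwise as $\varepsilon\to 0$, with local uniform bounds coming from the fact that an entire function is locally bounded. Applying dominated convergence and the Stieltjes inversion formula \eqref{eqnSMdef} to both $M$ and $\tilde{M}$, we obtain
\begin{align*}
 \rho((\lambda_1,\lambda_2]) - \tilde\rho((\lambda_1,\lambda_2]) = \lim_{\delta\to 0}\lim_{\varepsilon\to 0}\frac{1}{\pi}\int_{\lambda_1+\delta}^{\lambda_2+\delta} \im D(\lambda-\I\varepsilon)\,d\lambda = 0
\end{align*}
for all $\lambda_1<\lambda_2$, so $\rho=\tilde\rho$.

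With the equality of spectral measures in hand, the remainder is a direct appeal to Theorem~\ref{th:DiracUn}, which yields $\kappa=\tilde\kappa$, $q_{\rm sc}=\tilde{q}_{\rm sc}$, $q_{\rm am}=\tilde{q}_{\rm am}$ almost everywhere, and $S=\tilde{S}$. The only conceptual point requiring care is the interchange of limits in the Stieltjes formula applied to $D$; but since $D$ is entire and therefore bounded on compact subsets of $\C$, the dominated convergence argument is routine. I do not anticipate any serious obstacle here, as all nontrivial work has been carried out in Theorem~\ref{th:DiracUn}; this corollary is essentially a reformulation using the reconstruction of $\rho$ from the singular Weyl function.
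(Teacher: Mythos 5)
Your proposal is correct and follows exactly the route the paper intends: the paper justifies this corollary with the single remark ``Clearly, in view of \eqref{eqnSMdef}, we can rephrase this,'' i.e., the entirety of $M-\tilde{M}$ forces $\rho=\tilde{\rho}$ by Stieltjes inversion, after which Theorem~\ref{th:DiracUn} applies. Your elaboration of the symmetry $M(z^\ast)=M(z)^\ast$ and the dominated-convergence step is a sound filling-in of that ``clearly.''
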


\begin{remark}
Note that various inverse spectral problems for radial Dirac operators in dimension three (i.e., the case $\kappa\in\Z$) inside the unit ball have been studied in \cite{ahm2, ahm3, ser} (see also the references therein) in the case $q_{\rm am}$, $q_{\rm sc}\in L^p(0,1)$, $p\in (1,\infty)$.
In particular, \cite{ahm2, ahm3, ser} provide a complete solution of the inverse spectral problems by one spectrum and norming constants and by two spectra in the special case $q_{\rm am}$, $q_{\rm sc}\in L^2(0,1)$.

The case $\kappa=0$ is of course classical  and we refer to the monograph \cite{ls} as well as the recent papers \cite{ahm, ma, ma2, mp, pu, sak2}.
For a local Borg--Marchenko result in the case $\kappa=0$ and $q_{\rm am}, q_{\rm sc}\in L^\infty(a,b)$ see \cite{clge}.
For related uniqueness results see \cite{gkm}.
\end{remark} 

Similarly, one can improve Theorem~\ref{thmdBuniq} in the case when the weight matrix is not constant but the potential vanishes identically. 

\begin{corollary}\label{corIUII}
 Suppose that $Q = \tilde{Q} = 0$ and that the function
 \begin{equation}
   E(z,c) \tilde{E}(z,\tilde{c})^{-1}, \quad z\in\C^+
 \end{equation}
 is of bounded type for some $c\in (a,b)$ and $\tilde{c}\in (\tilde{a},\tilde{b})$. 
 If $\rho = \tilde{\rho}$, then there is a locally absolutely continuous bijection $\eta$ from $(a,b)$ onto $(\tilde{a}, \tilde{b})$ and a matrix $\Gamma_0\in\R^{2\times 2}$ with $\det(\Gamma_0)=1$ such that 
 \begin{align}\label{eqnUniqRelWei}
  \eta'(x) \tilde{R}(\eta(x)) = \Gamma_0^\ast R(x)\, \Gamma_0
 \end{align} 
 for almost all $x\in(a,b)$. 
 Moreover, the operator $U$ from $L^2((\tilde{a},\tilde{b});\tilde{R}(x)dx)$ to $L^2((a,b);R(x)dx)$ given by 
 \begin{align}
  U \tilde{f}(x) = \Gamma_0 \tilde{f}(\eta(x)), \quad x\in(a,b), 
 \end{align}
 is unitary with $S = U \tilde{S} U^{-1}$.
\end{corollary}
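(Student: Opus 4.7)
The plan is to invoke Theorem~\ref{thmdBuniq} as a black box and then exploit the vanishing potentials to reduce the gauge matrix $\Gamma$ to a constant. Since the hypothesis on $E(z,c)\tilde{E}(z,\tilde c)^{-1}$ being of bounded type and the equality $\rho = \tilde\rho$ are exactly those needed in Theorem~\ref{thmdBuniq}, there is a locally absolutely continuous increasing bijection $\eta:(a,b)\to(\tilde a,\tilde b)$ and a locally absolutely continuous matrix-valued function $\Gamma:(a,b)\to\R^{2\times 2}$ with $\det(\Gamma)=1$ satisfying both relations in~\eqref{eqnUniqRel}, together with the unitary intertwining operator $U$ given by $U\tilde{f}(x) = \Gamma(x)\tilde{f}(\eta(x))$.

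Next I would specialize to $Q = \tilde{Q} = 0$. Substituting into the second relation in~\eqref{eqnUniqRel} gives the pointwise (almost everywhere) identity $\Gamma(x)^\ast J\,\Gamma'(x) = 0$. Since $\det(\Gamma(x)) = 1$, the matrix $\Gamma(x)^\ast$ is invertible, and since $J$ is invertible, this forces $\Gamma'(x) = 0$ almost everywhere. Combined with local absolute continuity, $\Gamma$ is constant on $(a,b)$; call this constant matrix $\Gamma_0 \in \R^{2\times 2}$, which still satisfies $\det(\Gamma_0) = 1$. Plugging $\Gamma \equiv \Gamma_0$ into the first relation of~\eqref{eqnUniqRel} yields exactly~\eqref{eqnUniqRelWei}.

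Finally, the stated form of $U$ follows immediately from the general formula in Theorem~\ref{thmdBuniq} once $\Gamma(x)$ is replaced by the constant $\Gamma_0$, and the unitarity together with $S = U\tilde{S}U^{-1}$ is inherited directly from that theorem. I do not anticipate a genuine obstacle here; the only subtle point worth flagging is the step from $\Gamma^\ast J \Gamma' = 0$ to $\Gamma' = 0$, which relies crucially on the invertibility of both $\Gamma$ and $J$ (the latter being automatic since $J$ is the symplectic matrix), and on the a.e.\ relation being promoted to identically constant via the local absolute continuity of $\Gamma$.
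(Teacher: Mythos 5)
Your proposal is correct and follows exactly the paper's argument: apply Theorem~\ref{thmdBuniq}, use $Q=\tilde{Q}=0$ to get $\Gamma^\ast J\,\Gamma'=0$ almost everywhere, and then conclude from the invertibility of $\Gamma$ and $J$ (plus local absolute continuity) that $\Gamma$ is a constant $\Gamma_0$, after which the first relation in~\eqref{eqnUniqRel} becomes~\eqref{eqnUniqRelWei}. Your flagging of the passage from the a.e.\ vanishing of $\Gamma'$ to constancy via absolute continuity is a correct and worthwhile detail that the paper leaves implicit.
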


\begin{proof}
 Employing Theorem~\ref{thmdBuniq}, we infer from our additional assumptions that 
 \begin{align*}
  \Gamma(x)^\ast J\, \Gamma'(x) = 0, \quad x\in(a,b). 
 \end{align*}
  Since $\det(\Gamma)=1$ and $J$ is invertible, we conclude that $\Gamma$ is constant which establishes the claim. 
 \end{proof}

If, in addition to the assumptions of Corollary~\ref{corIUII}, we furthermore require that the determinant of the weights $R$ and $\tilde{R}$ are equal to one almost everywhere, then we may conclude from~\eqref{eqnUniqRelWei} that $\eta$ is linear with gradient one. 

\begin{remark}
Let us mention that Corollary \ref{corIUII} was obtained in \cite[Proposition 4.9]{ww} under the assumption that ${\rm tr}(R) \in L^1(a,c)$ for some $c\in (a,b)$, that is, the left endpoint $x=a$ is in the limit circle case. However, the case of Hamiltonians $R$ that may vanish on sets of positive Lebesgue measure has also been studied in \cite{ww}.
\end{remark}

\section{Two-spectra uniqueness results}

In this section, we will show to which extent the (purely discrete) spectra of two self-adjoint realizations determine the underlying differential expression as well as boundary conditions.  
Therefore, one of the endpoints has necessarily to be in the limit circle case. 
If this is the case for the left endpoint, then Hypothesis~\ref{hyp2} holds for any self-adjoint realization with separated boundary conditions and we may even choose a particular real entire fundamental system of solutions.

\begin{proposition}\label{propLC}
 Suppose Hypothesis~\ref{hyp1}, that $\tau$ is in the limit circle case at $a$ and that $S$ is self-adjoint realization of $\tau$ with separated boundary conditions. 
 Then there is a real entire solution $\Phi$ of $(\tau-z)u=0$ which lies in $\dom{S}$ near $a$ as well as a real entire solution $\Theta$ of $(\tau-z)u=0$ such that 
 \begin{align}
  W_a(\Theta(z_1,\cdot\,),\Theta(z_2,\cdot\,)) & =  W_a(\Phi(z_1,\cdot\,),\Phi(z_2,\cdot\,))  = 0, &   W_a(\Theta(z_1,\cdot\,),\Phi(z_2,\cdot\,)) & = 1, 
 \end{align}
 for all $z_1$, $z_2\in\C$. 
 In this case, the corresponding Weyl--Titchmarsh--Kodaira function is a Herglotz--Nevanlinna function. 
\end{proposition}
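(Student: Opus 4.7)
The idea is to build $\Phi$ and $\Theta$ by prescribing their boundary values at the limit-circle endpoint $a$ in terms of a suitable pair of real solutions of $\tau u=0$; all three Wronskian identities then become instances of the Pl\"ucker identity for the two-dimensional symplectic form $W_a$, and the Herglotz property of $M$ follows from a Green's-type computation.

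Since $\tau$ is in the limit circle case at $a$, the separated boundary condition for $S$ at $a$ has the form $W_a(f,u_a)=0$ for some real-valued solution $u_a$ of $\tau u=0$, and I may pick a complementary real solution $v_a$ of $\tau u=0$ with $W(u_a,v_a)=1$; both lie in $L^2$ near $a$. For any solution $f$ of $(\tau-z)u=0$, the Lagrange identity~\eqref{eqnLagrange} combined with the $L^2$-tails of $f$, $u_a$, $v_a$ near $a$ shows that $W_a(f,u_a)=\lim_{\alpha\to a}W_\alpha(f,u_a)$ and $W_a(f,v_a)$ exist, and that they are related to $W_\beta(f,u_a)$, $W_\beta(f,v_a)$ at any interior $\beta$ by explicit $L^2$-integrals of $\spr{f^*}{Ru_a}$, $\spr{f^*}{Rv_a}$ over $(a,\beta)$.

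Fix $c\in(a,b)$ and let $\psi_1,\psi_2$ be the real entire fundamental system of $(\tau-z)u=0$ normalized by $\psi_j(z,c)=e_j$. The integral formula above, together with the standard compact-in-$z$ uniform $L^2$-bounds on $\psi_j(z,\cdot)$ near $a$ that are available in the limit circle case, shows that the four functions $z\mapsto W_a(\psi_j(z,\cdot),u_a)$ and $z\mapsto W_a(\psi_j(z,\cdot),v_a)$ are real entire. I then define $\Phi(z,\cdot)$ and $\Theta(z,\cdot)$ as the real entire solutions of $(\tau-z)u=0$ with prescribed boundary values $W_a(\Phi(z,\cdot),u_a)=0$, $W_a(\Phi(z,\cdot),v_a)=1$ and $W_a(\Theta(z,\cdot),u_a)=1$, $W_a(\Theta(z,\cdot),v_a)=0$, respectively. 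The $2\times 2$ linear systems determining their $\psi_j$-coefficients have determinant, by Pl\"ucker, equal to $W(\psi_1(z,\cdot),\psi_2(z,\cdot))\cdot W(u_a,v_a)=1$, so entire solutions of those systems exist. Applying Pl\"ucker once more to arbitrary solutions $f,g$ with boundary values at $a$ gives $W_a(f,g)=W_a(f,u_a)W_a(g,v_a)-W_a(f,v_a)W_a(g,u_a)$; substituting the prescribed values yields $W_a(\Phi(z_1,\cdot),\Phi(z_2,\cdot))=W_a(\Theta(z_1,\cdot),\Theta(z_2,\cdot))=0$ and $W_a(\Theta(z_1,\cdot),\Phi(z_2,\cdot))=1$ in one stroke. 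Consequently, $\Phi$ satisfies the boundary condition at $a$ and lies in $\dom{S}$ near $a$, while the case $z_1=z_2=z$ gives $W(\Theta(z,\cdot),\Phi(z,\cdot))=1$, so $\Theta,\Phi$ form a fundamental system.

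For the Herglotz property, let $\Psi(z,\cdot)=\Theta(z,\cdot)+M(z)\Phi(z,\cdot)$ be the Weyl solution in $\dom{S}$ near $b$ for $z\in\C\setminus\R$. The Lagrange identity applied to $\Psi(z,\cdot)$ and $\Psi(z,\cdot)^*=\Psi(z^*,\cdot)$, together with the vanishing of $W_b(\Psi,\Psi^*)$ (from the boundary condition at $b$ in the limit circle case, or $L^2$-decay in the limit point case), yields $W_a(\Psi(z,\cdot),\Psi(z,\cdot)^*)=(z^*-z)\int_a^b\spr{\Psi(z,s)}{R(s)\Psi(z,s)}ds$. Expanding the left-hand side via the three Wronskian identities from the previous paragraph collapses it to $M(z)^*-M(z)=-2\I\,\im(M(z))$, so $\im(M(z))=\im(z)\,\|\Psi(z,\cdot)\|^2_R>0$ for $z\in\C^+$, the strict positivity coming from $R$ being positive definite almost everywhere and $\Psi(z,\cdot)$ being a non-trivial solution. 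Combined with the standard analyticity of $M$ on $\C\setminus\R$, this shows $M$ is a Herglotz--Nevanlinna function. The chief obstacle is proving the entire $z$-dependence of the boundary-value functionals $W_a(\psi_j(z,\cdot),u_a)$ and $W_a(\psi_j(z,\cdot),v_a)$, which requires uniform-in-$z$ $L^2$-control of $\psi_j(z,\cdot)$ near the limit circle endpoint; the remainder is linear algebra (Pl\"ucker) and a standard Green's identity.
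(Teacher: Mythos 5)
Your proposal is correct and follows essentially the same route as the paper, which gives no proof of its own but refers to the standard construction for Sturm--Liouville and Dirac operators in \cite[App.~A]{kst2} and \cite[Sect.~5]{bekt}: prescribe the boundary data $W_a(\Phi,u_a)=0$, $W_a(\Phi,v_a)=1$, $W_a(\Theta,u_a)=1$, $W_a(\Theta,v_a)=0$ relative to a real fundamental system $u_a$, $v_a$ of $\tau u=0$, verify analyticity of the boundary functionals via the Lagrange identity and uniform local $L^2$-bounds near the limit circle endpoint, and obtain the Wronskian relations from the Pl\"ucker identity and the Herglotz property from Green's formula. The one ingredient you correctly flag but do not prove, the locally uniform $L^2$-control of $\psi_j(z,\cdot)$ near $a$, is exactly the technical content of the cited references, so no essential step is missing.
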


This result can be proven along the lines of the corresponding result for Sturm--Liouville equations; see, e.g., \cite[App.~A]{kst2}, \cite[Sect.~5]{bekt}.
Note that hereby, the measure appearing in the Herglotz--Nevanlinna representation 
\begin{align}
 M(z) = \re(M(\I)) + m_c z + \int_\R \left(\frac{1}{\lambda-z} - \frac{\lambda}{1+\lambda^2}\right) d\rho(\lambda), \quad z\in\C\backslash\R
\end{align}
of the singular Weyl--Titchmarsh--Kodaira function is precisely the spectral measure of $S$ associated with the real entire solution $\Phi$ in view of~\eqref{eqnSMdef}. 
For the purpose of this section, we will now derive a growth restriction for these particular solutions. 
 
\begin{corollary}\label{corLPcartwright}
 Suppose Hypothesis~\ref{hyp1}, that $\tau$ is in the limit circle case at $a$ and that $S$ is a self-adjoint realization of $\tau$ with separated boundary conditions. 
 If $\Phi$ is a real entire solution of $(\tau-z)u=0$ as in Proposition~\ref{propLC}, then the entire functions $\Phi_1(\,\cdot\,,c)$ and $\Phi_2(\,\cdot\,,c)$ belong to the Cartwright class for every $c\in(a,b)$. 
\end{corollary}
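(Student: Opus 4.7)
The plan is to reduce the corollary to Krein's theorem cited after Theorem~\ref{thmdBuniq}: an entire function of finite exponential type lies in the Cartwright class if and only if its logarithmic integral on the real line is finite. Setting $E(z,c) = \Phi_1(z,c) - \I \Phi_2(z,c)$, I would use
\[
\Phi_1(z,c) = \tfrac{1}{2}(E(z,c) + E^\#(z,c)), \quad \Phi_2(z,c) = \tfrac{\I}{2}(E(z,c) - E^\#(z,c)),
\]
together with closure of the Cartwright class under $F \mapsto F^\#$ and complex-linear combinations, to reduce the statement to showing $E(\,\cdot\,,c)$ itself lies in the Cartwright class. This in turn amounts to verifying (i) finite exponential type and (ii) finite logarithmic integral on $\R$ for $E(\,\cdot\,,c)$.

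For (i), I would exploit Proposition~\ref{propLC} to fix $\Phi$ through a $z$-independent boundary behaviour at $a$ (for instance the normalization $W_a(\Theta,\Phi) = 1$). A Volterra iteration of the Dirac equation $J\Phi' + Q\Phi = zR\Phi$ on $[a,c]$ started from this boundary normalization then yields $\|\Phi(z,c)\|_{\C^2} \le C\E^{A|z|}$, with $A$ proportional to $\int_a^c \|R(x)\|\,dx$. Under the limit circle assumption at $a$ this integral is finite (after, if necessary, the preliminary Liouville reduction of Section~\ref{s2} rendering $R$ locally absolutely continuous), giving $E(\,\cdot\,,c)$ finite exponential type.

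For (ii), I would note that $|E(\lambda,c)|^2 = \|\Phi(\lambda,c)\|_{\C^2}^2$ on $\R$ and reduce via a further Liouville transformation to the case $R = I$. The crucial calculation, using self-adjointness of $\lambda - Q$ for real $\lambda$, is the identity
\[
\frac{d}{dx}\|\Phi(\lambda,x)\|_{\C^2}^2 = \spr{\Phi(\lambda,x)}{[J,Q(x)]\Phi(\lambda,x)},
\]
whose right-hand side is independent of $\lambda$. Gronwall then produces a uniform bound $\|\Phi(\lambda,c)\|_{\C^2} \le C'$ for $\lambda \in \R$, making the logarithmic integral trivially finite.

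The main obstacle will be exactly this cancellation of $\lambda$ in step (ii). The naive extension of the estimate in (i) to the real axis would only give $\|\Phi(\lambda,c)\| \lesssim \E^{A|\lambda|}$, which fails the logarithmic integral test. The decisive point is that on $\R$ the Dirac expression is formally self-adjoint, so the $\lambda$-dependent parts of $\frac{d}{dx}\|\Phi\|^2$ cancel and leave only a commutator term controlled by $\|Q\|$. Handling a general positive-definite $R$ rather than $R = I$ is a secondary technicality that the Liouville reductions of Section~\ref{s2} absorb.
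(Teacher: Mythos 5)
Your algebraic reduction to $E(\,\cdot\,,c)$ and the commutator identity in step (ii) are correct, but the argument has a genuine gap: in both steps you treat the limit circle endpoint $a$ as if it were a \emph{regular} endpoint. The Volterra iteration in (i) and the Gronwall estimate in (ii) must be launched at $a$ from a $z$-independent initial value, which requires $\int_a^c(\|Q(x)\|+\|R(x)\|)\,dx<\infty$; the limit circle hypothesis does not give this. The paper's own motivating example, the radial expression \eqref{eq:radexprB} with $0<\kappa<\tfrac{1}{2}$, is in the limit circle case at $0$ while $\int_0^c\|Q(x)\|\,dx=\infty$, so neither iteration can be started at the endpoint there. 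Your parenthetical fix via the preliminary Liouville reduction only makes $R$ locally absolutely continuous on the \emph{open} interval and says nothing about integrability up to $a$ (and offers nothing for $Q$); the unsupported claim that the limit circle case forces $\int_a^c\|R\|\,dx<\infty$ is where the real content would have to lie. Starting the Gronwall bound at an interior point $x_0$ instead merely shifts the problem to proving $\sup_{\lambda\in\R}\|\Phi(\lambda,x_0)\|<\infty$, which is the same unestablished statement at $x_0$.

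The paper's proof avoids the endpoint entirely: using the fundamental system of Proposition~\ref{propLC} it writes $\Phi_1(z,c)^{-2}=\frac{\Theta_1(z,c)}{\Phi_1(z,c)}\bigl(\frac{\Phi_2(z,c)}{\Phi_1(z,c)}-\frac{\Theta_2(z,c)}{\Theta_1(z,c)}\bigr)$, observes that the three quotients are Herglotz--Nevanlinna functions up to sign and hence of size $O\bigl((1+|z|^2)/|\im(z)|\bigr)$ off the real axis, and concludes from Matsaev's theorem that $\Phi_1(\,\cdot\,,c)$ belongs to the Cartwright class; no integrability of $Q$ or $R$ at $a$ is ever used. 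If you want to keep an ODE-based route, the workable version is to first perform the Liouville transform of Section~\ref{s2} that removes $Q$ altogether, reducing to a canonical system for which the limit circle case at $a$ \emph{is} equivalent to $\int_a^{c}\trace{\tilde{R}(x)}\,dx<\infty$; then a Volterra iteration from $a$ does yield finite exponential type and boundedness on $\R$. But that is a different argument from the one you wrote, and you would still have to verify that the transformed $\Phi$ of Proposition~\ref{propLC} has a $z$-independent boundary value at $a$.
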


\begin{proof}
 This follows essentially from an application of a result by Krein \cite{kr52}. 
 Alternatively, note that by Proposition~\ref{propLC} the fractions on the right-hand side of 
 \begin{align*}
  \frac{1}{\Phi_1(z,c)^2} = \frac{\Theta_1(z,c)}{\Phi_1(z,c)} \left( \frac{\Phi_2(z,c)}{\Phi_1(z,c)} - \frac{\Theta_2(z,c)}{\Theta_1(z,c)} \right), \quad z\in\C\backslash\R, 
 \end{align*}
 are Herglotz--Nevanlinna functions (up to the sign), which are bounded by 
 \begin{align*}
  C \frac{1+|z|^2}{|\im(z)|} \leq K \exp\left( L \frac{1+\sqrt{|z|}}{\sqrt[4]{|\im(z)|}} \right), \quad z\in\C\backslash\R,
 \end{align*}
 for some constants $C$, $K$, $L\in\R^+$.  
 Thus also
 \begin{align*}
  \left| \frac{1}{\Phi_1(z,c)^2} \right| \leq 2 K^2 \exp\left( 2L \frac{1+\sqrt{|z|}}{\sqrt[4]{|\im(z)|}} \right), \quad z\in\C\backslash\R,
 \end{align*}
 which guarantees that $\Phi_1(\,\cdot\,,c)$ belongs to the Cartwright class in view of Matsaev's theorem \cite[Theorem~26.4.4]{lev}). 
 In much the same manner one shows that the same holds true for $\Phi_2(\,\cdot\,,c)$. 
\end{proof}

In order to state our inverse uniqueness results, let $\tau$ and $\tilde{\tau}$ be two differential expressions of the form~\eqref{eqnDiffExp}, both of them satisfying Hypothesis~\ref{hyp1}. 
Again, all of the quantities associated with $\tau$ and $\tilde{\tau}$ will be denoted as in the preceding sections and distinguished with a twiddle. 

\begin{theorem}\label{thmIUTS}
 Suppose that $\tau$ and $\tilde{\tau}$ are in the limit circle case at the left endpoint and that $S$, $T$ as well as $\tilde{S}$, $\tilde{T}$ are two distinct self-adjoint realizations with purely discrete spectra and the same boundary condition at the right endpoint (if any). 
 If 
 \begin{align}\label{eqnTSequal}
  \sigma(S) = \sigma(\tilde{S}) \quad\text{and}\quad \sigma(T) = \sigma(\tilde{T}),
 \end{align}
 then there is a locally absolutely continuous bijection $\eta$ from $(a,b)$ onto $(\tilde{a},\tilde{b})$ and a locally absolutely continuous function $\Gamma:(a,b)\rightarrow\R^{2\times 2}$ with $\det(\Gamma)=1$ such that 
\begin{align}
 \eta' \tilde{R}\circ\eta  & = \Gamma^\ast R\, \Gamma, & 
 \eta' \tilde{Q}\circ\eta  & = \Gamma^\ast Q\, \Gamma + \Gamma^\ast J\, \Gamma'.
\end{align}
Moreover, the operator $U$ from $L^2((\tilde{a},\tilde{b});\tilde{R}(x)dx)$ to $L^2((a,b);R(x)dx)$ given by 
\begin{align}\label{eqnLiouvilleT}
 U \tilde{f}(x) = \Gamma(x) \tilde{f}(\eta(x)), \quad x\in(a,b), 
\end{align} 
is unitary with $S = U \tilde{S} U^{-1}$ and $T = U \tilde{T} U^{-1}$. 
\end{theorem}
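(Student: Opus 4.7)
The plan is to reduce the two-spectra hypothesis to the one-spectrum setting of Theorem~\ref{thmdBuniq}, via a Borg--Marchenko-type argument that exploits the Herglotz--Nevanlinna character and Cartwright-class growth of the singular Weyl--Titchmarsh--Kodaira function. First, since $\tau,\tilde\tau$ are limit circle at $a$, I would invoke Proposition~\ref{propLC} to pick real entire fundamental systems $\Phi,\Theta$ (with $\Phi\in\dom{S}$ near $a$ and $W_a(\Theta,\Phi)=1$) and $\tilde\Phi,\tilde\Theta$, and Corollary~\ref{corLPcartwright} to ensure that all components are of Cartwright class in $z$. The corresponding singular Weyl functions $M_S,\tilde M_S$ are then Herglotz--Nevanlinna and of bounded type in $\C^+$. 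Because $S,T$ share the boundary condition at $b$ but differ at $a$, there is a unique angle $\alpha\in(0,\pi)$ with $\cos\alpha\,\Phi+\sin\alpha\,\Theta\in\dom{T}$ near $a$, from which one reads off $\sigma(S)=\{\text{poles of }M_S\}$ and $\sigma(T)=\{\lambda : M_S(\lambda)=\cot\alpha\}$; an analogous angle $\tilde\alpha\in(0,\pi)$ controls the tilded system.

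The crucial step is to deduce from~\eqref{eqnTSequal} that, after an admissible renormalization of $\tilde\Phi,\tilde\Theta$, one has $M_S=\tilde M_S$. The meromorphic functions $F:=M_S-\cot\alpha$ and $\tilde F:=\tilde M_S-\cot\tilde\alpha$ have the same zeros (at $\sigma(T)=\sigma(\tilde T)$) and the same poles (at $\sigma(S)=\sigma(\tilde S)$), both simple. Hence $F/\tilde F$ is entire and nowhere zero, real on $\R$, and of bounded type in both half-planes; by a classical result it must be of the form $K\E^{cz}$ with $K\in\R\setminus\{0\}$ and $c\in\R$. The Herglotz property of $F$ and $\tilde F$ rules out any nonzero $c$ (otherwise the rotation $\E^{\I cy}$ would move $\tilde F(\I y)$ out of $\C^+$) and forces $K>0$, giving $M_S=K\tilde M_S+(\cot\alpha-K\cot\tilde\alpha)$. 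One then replaces $\tilde\Phi$ by $\tilde\Phi/\sqrt{K}$ and $\tilde\Theta$ by $\sqrt{K}\,\tilde\Theta+\sqrt{K}(K\cot\tilde\alpha-\cot\alpha)\tilde\Phi$, which preserves real entireness, membership of $\tilde\Phi$ in $\dom{\tilde S}$ near $\tilde a$, and the Wronskian normalization~$1$; a short computation shows that the new singular Weyl function equals $M_S$ and the new angle $\tilde\alpha'$ satisfies $\cot\tilde\alpha'=\cot\alpha$. In view of~\eqref{eqnSMdef} this yields $\rho=\tilde\rho$.

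With $\rho=\tilde\rho$ established, Theorem~\ref{thmdBuniq} applies: its bounded-type hypothesis on $E(z,c)\tilde E(z,\tilde c)^{-1}$ is automatic because $E(\,\cdot\,,c)=\Phi_1(\,\cdot\,,c)-\I\Phi_2(\,\cdot\,,c)$ and its tilded counterpart are in the Cartwright class by Corollary~\ref{corLPcartwright}, hence of bounded type in $\C^+$ by Krein's theorem. This supplies $\eta$, $\Gamma$, the unitary $U$, and the intertwining $S=U\tilde S U^{-1}$, together with the identity $\Phi(z,x)=\Gamma(x)\tilde\Phi(z,\eta(x))$ from~\eqref{eqnGamma}. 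To upgrade to $T=U\tilde T U^{-1}$ I would argue that the Liouville-transformed second fundamental solution $\Gamma(x)\tilde\Theta(z,\eta(x))$ is a real entire solution of $(\tau-z)u=0$ whose Wronskian with $\Phi$ equals $1$, and that matching the Weyl-type decompositions of the transformed Weyl solution against $\Theta+M_S\Phi$ — using the equality $\tilde M_S=M_S$ arranged in the previous step — forces $\Gamma(x)\tilde\Theta(z,\eta(x))=\Theta(z,x)$. Combined with $\tilde\alpha'=\alpha$, this shows that $U$ maps the solution selecting the boundary condition of $\tilde T$ at $\tilde a$ to the one selecting the boundary condition of $T$ at $a$, yielding $T=U\tilde T U^{-1}$.

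The main obstacle is the Herglotz/bounded-type uniqueness argument in the second paragraph: reconstructing $M_S$ from just two interlacing spectra requires both the Herglotz character (to kill the exponential factor and fix the sign of $K$) and the Cartwright-class growth (to reduce the ratio $F/\tilde F$ to the form $K\E^{cz}$ in the first place). The remaining parts — identifying the eigenvalues with poles and level sets of $M_S$, the admissible renormalization, and transporting the result for $S$ to one for $T$ through the Liouville transform — are essentially bookkeeping once this Herglotz step is in hand.
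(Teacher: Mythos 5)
Your argument follows the paper's proof in all essentials: both reduce the two-spectra hypothesis to Theorem~\ref{thmdBuniq} by normalizing the fundamental systems so that the singular Weyl functions coincide (the paper invokes Krein's theorem \cite[Theorem~27.2.1]{lev} at the point where you re-derive it by hand, via the zero-free bounded-type quotient $F/\tilde{F}=K\E^{cz}$ and the Herglotz property killing $c$ and fixing the sign of $K$), and both then transport the boundary condition of $T$ through the resulting Liouville transform. The only notable difference is the last step, where the paper gets by with matching a single shared eigenfunction $\Gamma(x)\tilde{\Theta}(\lambda,\eta(x))$ for one $\lambda\in\sigma(T)$, whereas you establish the stronger global identity $\Gamma(x)\tilde{\Theta}(z,\eta(x))=\Theta(z,x)$; both variants are correct.
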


\begin{proof}
Let $\Theta$, $\Phi$ be a real entire fundamental system of $(\tau-z)u=0$ as in Proposition~\ref{propLC} associated with the operators $S$. 
Due to our assumptions, the corresponding singular Weyl--Titchmarsh--Kodaira function $M$ is meromorphic and has poles precisely at the spectrum of $S$. 
 Furthermore, note that the value $h = M(\lambda)$ is independent of $\lambda\in\sigma(T)$ and conversely, each $\lambda\in\C$ for which $h = M(\lambda)$  is an eigenvalue of $T$. 
 Thus we may assume (upon replacing $\Theta$ by $\Theta + h \Phi$) that the set of zeros of $M$ is precisely the spectrum of $T$.  
 If $\tilde{\Theta}$, $\tilde{\Phi}$ is a similar real entire fundamental system of $(\tilde{\tau}-z)u=0$, then we infer from our assumptions~\eqref{eqnTSequal} and a theorem by Krein \cite[Theorem\ 27.2.1]{lev},  that $M=C^2 \tilde{M}$ for some positive constant $C$. 
 Moreover, upon replacing $\Phi$ with $C \Phi$ as well as $\Theta$ with $C^{-1}\Theta$, we may even assume that $M = \tilde{M}$. 
 From this we obtain $\rho = \tilde{\rho}$ and hence, except for the very last part, the claim follows from Theorem~\ref{thmdBuniq} (taking Corollary~\ref{corLPcartwright} into account).
 
 In order to show that $U$ maps $\tilde{T}$ to $T$, note that $U$ maps $\tilde{T}_{\mathrm{max}}$ given by 
\begin{align*}
\begin{split}
 \dom{\tilde{T}_{\mathrm{max}}} = \lbrace f \in L^2((\tilde{a},\tilde{b});\tilde{R}(x)dx) \,|\, f_1,\, f_2\in & AC_{\mathrm{loc}}(\tilde{a},\tilde{b}), \\ & \tilde{\tau}f\in L^2((\tilde{a},\tilde{b});\tilde{R}(x)dx) \rbrace, 
\end{split}
\end{align*}
 with $\tilde{T}_{\mathrm{max}} f = \tilde{\tau} f$ for $f\in \dom{\tilde{T}_{\mathrm{max}}}$, to the corresponding operator $T_{\mathrm{max}}$ given by 
\begin{align*}
\begin{split}
 \dom{T_{\mathrm{max}}} = \lbrace f \in L^2((a,b);R(x)dx) \,|\, f_1,\, f_2\in & AC_{\mathrm{loc}}(a,b), \\ & \tau f\in L^2((a,b);R(x)dx) \rbrace, 
\end{split}
\end{align*}
 with $T_{\mathrm{max}} f = \tau f$ for $f\in\dom{T_{\mathrm{max}}}$. 
 Thus, the operator $U \tilde{T} U^{-1}$ is a self-adjoint realization of $\tau$ with separated boundary conditions. 
 Furthermore, from the first part of the proof we know that the boundary condition at the right endpoint is the same as the one for $T$. 
 In order to show that the boundary condition at $a$ is the same as well, pick some $\lambda\in\sigma(T)$ and note that the function  
 \begin{align*}
  \Gamma(x) \tilde{\Theta}(\lambda,\eta(x)), \quad x\in(a,b),
 \end{align*}
 is a solution of $(\tau -\lambda)u=0$. 
 Moreover, it is an eigenfunction of $U \tilde{T} U^{-1}$ as well as of $T$, 
 which establishes the remaining claim. 
\end{proof}

Of course, as in Section~\ref{s4}, it is possible to improve on Theorem~\ref{thmIUTS} if one has further a priori information on the coefficients of the underlying differential expressions. 
For example, one obtains the following two analogues of Corollary~\ref{corIUI} and Corollary~\ref{corIUII} without changing their respective proof.

\begin{corollary}\label{corIUITS}
 Suppose that $R=\tilde{R}=I$, that $\tau$ and $\tilde{\tau}$ are in the limit circle case at the left endpoint and that $S$, $T$ as well as $\tilde{S}$, $\tilde{T}$ are two distinct self-adjoint realizations with discrete spectra and the same boundary condition at the right endpoint (if any). 
 If 
 \begin{align}
  \sigma(S) = \sigma(\tilde{S}) \quad\text{and}\quad \sigma(T) = \sigma(\tilde{T}),
 \end{align}
 then there is an $\eta_0\in\R$ and a locally absolutely continuous real-valued function $\varphi$ on $(a,b)$ such that 
 \begin{align}
  \tilde{Q}(\eta_0 + x) & = \E^{-\varphi(x)J} Q(x) \E^{\varphi(x) J}  - \varphi'(x) I
 \end{align}
 for almost all $x\in(a,b)$. 
 Moreover, the operator $U$ from $L^2((\tilde{a},\tilde{b});Idx)$ to $L^2((a,b);Idx)$ given by 
 \begin{align}
  U\tilde{f}(x) = \E^{\varphi(x) J} \tilde{f}(\eta_0 + x), \quad x\in(a,b),
 \end{align}
is unitary with $S = U \tilde{S} U^{-1}$ and $T = U \tilde{T} U^{-1}$.
\end{corollary}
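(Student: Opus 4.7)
The plan is to follow exactly the route used for Corollary~\ref{corIUI}, but feeding in Theorem~\ref{thmIUTS} instead of Theorem~\ref{thmdBuniq}. First I would apply Theorem~\ref{thmIUTS} to the two pairs of operators: under the hypothesis $\sigma(S)=\sigma(\tilde S)$ and $\sigma(T)=\sigma(\tilde T)$ together with the limit circle assumption at the left endpoint (which allows Proposition~\ref{propLC} and Corollary~\ref{corLPcartwright} to be invoked, giving us that the relevant $E$-functions lie in the Cartwright class, hence the quotient in \eqref{eqnquotE1E2} is of bounded type), the theorem produces a locally absolutely continuous bijection $\eta:(a,b)\to(\tilde a,\tilde b)$ and a locally absolutely continuous $\Gamma:(a,b)\to\R^{2\times 2}$ with $\det(\Gamma)=1$ such that
\begin{align*}
  \eta'\tilde{R}\circ\eta=\Gamma^\ast R\,\Gamma,\qquad \eta'\tilde{Q}\circ\eta=\Gamma^\ast Q\,\Gamma+\Gamma^\ast J\,\Gamma',
\end{align*}
and such that $U\tilde f(x)=\Gamma(x)\tilde f(\eta(x))$ implements $S=U\tilde S U^{-1}$ and $T=U\tilde T U^{-1}$ simultaneously.

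Next I would specialize using $R=\tilde R=I$. The first relation becomes $\Gamma(x)^\ast\Gamma(x)=\eta'(x)I$, and combined with $\det(\Gamma)=1$ this forces $\eta'(x)=1$ almost everywhere, hence $\eta(x)=\eta_0+x$ for some $\eta_0\in\R$. In particular $\Gamma(x)$ is then unitary with determinant one, so it is a proper rotation and admits the representation $\Gamma(x)=\E^{\varphi(x)J}$ for a real-valued function $\varphi$ on $(a,b)$. Local absolute continuity of the entries of $\Gamma$ transfers to $\varphi$ (choose any continuous branch of the argument of the complex-valued entry $\Gamma_{11}+\I\Gamma_{21}$, which is unimodular and locally absolutely continuous). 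Substituting $\Gamma=\E^{\varphi J}$ and $\Gamma'=\varphi' J\E^{\varphi J}$ into the second relation above, and using $J^2=-I$, gives
\begin{align*}
  \tilde{Q}(\eta_0+x)=\E^{-\varphi(x)J}Q(x)\E^{\varphi(x)J}-\varphi'(x)I
\end{align*}
almost everywhere.

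Finally, the unitary operator $U$ takes exactly the claimed form $U\tilde f(x)=\E^{\varphi(x)J}\tilde f(\eta_0+x)$, and inherits the two intertwining identities $S=U\tilde SU^{-1}$ and $T=U\tilde TU^{-1}$ directly from Theorem~\ref{thmIUTS}. The only genuine work — and therefore the main obstacle, should any arise — is the verification that $\varphi$ can be chosen locally absolutely continuous; this is routine but needs the observation that $\Gamma_{11}+\I\Gamma_{21}$ is a locally absolutely continuous unimodular function, whose continuous argument is then locally absolutely continuous as well. Everything else is an algebraic rewriting of the conclusion of Theorem~\ref{thmIUTS}, so no change to the proof of Corollary~\ref{corIUI} is needed.
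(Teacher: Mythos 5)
Your proposal is correct and matches the paper's intent exactly: the paper states that Corollary~\ref{corIUITS} follows from Theorem~\ref{thmIUTS} ``without changing'' the proof of Corollary~\ref{corIUI}, which is precisely the substitution you carry out, including the reduction $\Gamma^\ast\Gamma=\eta' I$, $\det(\Gamma)=1\Rightarrow\eta(x)=\eta_0+x$, the rotation representation $\Gamma=\E^{\varphi J}$, and the inheritance of both intertwining relations. No gaps.
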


Of course, as remarked after Corollary~\ref{corIUI}, one may infer that $\varphi$ is a constant upon requiring that the traces of $Q$ and $\tilde{Q}$ are equal almost everywhere. Also, one can apply this result to radial Dirac operators \eqref{eq:radexprB} in the case when $|\kappa|<\frac{1}{2}$.  

\begin{corollary}\label{corIUIITS}
 Suppose that $Q = \tilde{Q} = 0$, that $\tau$ and $\tilde{\tau}$ are in the limit circle case at the left endpoint and that $S$, $T$ as well as $\tilde{S}$, $\tilde{T}$ are two distinct self-adjoint realizations with discrete spectra and the same boundary condition at the right endpoint (if any). 
 If 
 \begin{align}
  \sigma(S) = \sigma(\tilde{S}) \quad\text{and}\quad \sigma(T) = \sigma(\tilde{T}),
 \end{align}
 then there is a locally absolutely continuous bijection $\eta$ from $(a,b)$ onto $(\tilde{a}, \tilde{b})$ and a matrix $\Gamma_0\in\R^{2\times 2}$ with $\det(\Gamma_0)=1$ such that 
 \begin{align}
  \eta'(x) \tilde{R}(\eta(x)) = \Gamma_0^\ast R(x)\, \Gamma_0
 \end{align} 
 for almost all $x\in(a,b)$. 
 Moreover, the operator $U$ from $L^2((\tilde{a},\tilde{b});\tilde{R}(x)dx)$ to $L^2((a,b);R(x)dx)$ given by 
 \begin{align}
  U \tilde{f}(x) = \Gamma_0 \tilde{f}(\eta(x)), \quad x\in(a,b), 
 \end{align}
 is unitary with $S = U \tilde{S} U^{-1}$ and $T = U \tilde{T} U^{-1}$.
\end{corollary}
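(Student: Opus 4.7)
The plan is to mimic the proof of Theorem~\ref{thmIUTS} essentially verbatim, except that the application of Theorem~\ref{thmdBuniq} at the end is replaced by an application of Corollary~\ref{corIUII}, which is available to us thanks to the extra a priori information $Q=\tilde{Q}=0$. The two-spectra hypothesis will again be used only to reduce the problem to an equality of spectral measures.

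First I would invoke Proposition~\ref{propLC} to pick a real entire fundamental system $\Theta$, $\Phi$ of $(\tau-z)u=0$ adapted to $S$ (so that $\Phi$ lies in $\dom{S}$ near $a$, the Wronskians among $\Theta$, $\Phi$ at $a$ take the normalized values, and the associated singular Weyl--Titch\-marsh--Kodaira function $M$ is Herglotz--Nevanlinna), and analogously $\tilde{\Theta}$, $\tilde{\Phi}$ for $\tilde{\tau}$. Because $\sigma(S)$ and $\sigma(T)$ are discrete and $M$ is Herglotz--Nevanlinna with poles precisely at $\sigma(S)$, the value $h=M(\lambda)$ is constant along $\lambda\in\sigma(T)$. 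Replacing $\Theta$ by $\Theta+h\Phi$ (and doing the same on the tilded side) we can assume that the zero sets of $M$ and $\tilde{M}$ coincide with $\sigma(T)$ and $\sigma(\tilde{T})$, respectively. A theorem of Krein (used in the proof of Theorem~\ref{thmIUTS}) then gives $M=C^{2}\tilde{M}$ for some $C>0$, and after rescaling $\Phi\mapsto C\Phi$, $\Theta\mapsto C^{-1}\Theta$ we reach $M=\tilde{M}$, hence $\rho=\tilde{\rho}$ by~\eqref{eqnSMdef}.

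Next I would verify the bounded-type hypothesis in Corollary~\ref{corIUII}. Since $\tau$ and $\tilde{\tau}$ are in the limit circle case at the left endpoint, Corollary~\ref{corLPcartwright} says that the components of $\Phi(\,\cdot\,,c)$ and $\tilde{\Phi}(\,\cdot\,,\tilde{c})$ belong to the Cartwright class, so the same is true for the de Branges functions $E(\,\cdot\,,c)$ and $\tilde{E}(\,\cdot\,,\tilde{c})$ defined by \eqref{eq:3.1}. By Krein's theorem (\cite[Theorem 6.17]{rosrov}, \cite[\S 16.1]{lev}) these are of bounded type in $\C^{+}$, and the quotient $E(z,c)\tilde{E}(z,\tilde{c})^{-1}$ is therefore of bounded type as well. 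With $Q=\tilde{Q}=0$, $\rho=\tilde{\rho}$, and this bounded-type condition in place, Corollary~\ref{corIUII} immediately produces a locally absolutely continuous bijection $\eta\colon(a,b)\to(\tilde{a},\tilde{b})$ and a constant matrix $\Gamma_{0}\in\R^{2\times 2}$ with $\det(\Gamma_{0})=1$ satisfying \eqref{eqnUniqRelWei}, and such that the prescribed map $U$ is unitary and intertwines $\tilde{S}$ with $S$.

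It remains to check that $U$ also intertwines $\tilde{T}$ with $T$, and this goes exactly as at the end of the proof of Theorem~\ref{thmIUTS}: the Liouville transform \eqref{eqnLiouvilleT} carries the maximal operator $\tilde{T}_{\mathrm{max}}$ onto $T_{\mathrm{max}}$, so $U\tilde{T}U^{-1}$ is a self-adjoint realization of $\tau$ with separated boundary conditions that coincides with $T$ at the right endpoint (by the first part of the proof, where the right boundary conditions for $S$ and $\tilde{S}$ were assumed equal). To pin down the left boundary condition, pick any $\lambda\in\sigma(T)=\sigma(\tilde{T})$, and observe that $\Gamma_{0}\tilde{\Theta}(\lambda,\eta(\,\cdot\,))$ is a solution of $(\tau-\lambda)u=0$ which is an eigenfunction of both $U\tilde{T}U^{-1}$ and $T$; since the eigenspace of $T$ at $\lambda$ is one-dimensional, the two realizations share this boundary condition as well, and hence $U\tilde{T}U^{-1}=T$. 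I expect no genuine obstacle here: all the heavy lifting has been done in Theorem~\ref{thmIUTS}, Corollary~\ref{corIUII}, and Corollary~\ref{corLPcartwright}; the only (minor) point requiring a little care is the reduction $M=\tilde{M}$ via Krein's theorem, which must be done before invoking Corollary~\ref{corIUII} so that the latter is applied to coefficients with identical spectral measures.
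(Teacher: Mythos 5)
Your proposal is correct and follows essentially the same route as the paper: the paper obtains this corollary by combining Theorem~\ref{thmIUTS} with the argument of Corollary~\ref{corIUII} (the hypothesis $Q=\tilde{Q}=0$ forces $\Gamma^\ast J\,\Gamma'=0$, hence $\Gamma\equiv\Gamma_0$ constant), and your reshuffling --- reducing the two-spectra data to $\rho=\tilde{\rho}$ as in the proof of Theorem~\ref{thmIUTS}, invoking Corollary~\ref{corIUII} (with the bounded-type hypothesis supplied by Corollary~\ref{corLPcartwright}), and then repeating the $T$-intertwining argument --- uses exactly the same ingredients. No gaps beyond those already present in the paper's own (sketched) argument.
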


Again, we may conclude that $\eta$ is linear with gradient one if we require the determinant of the weight matrices $R$ and $\tilde{R}$ to be equal to one almost everywhere. 
Finally, let us also mention that if the boundary conditions of $S$, $T$ and $\tilde{S}$, $\tilde{T}$ are known a priori,
then one may even say more about the matrix $\Gamma_0$ from evaluating~\eqref{eqnGamma} and its analog for $\Theta(z,x)$
 at $z=0$ (note that $\tau$ and $\tilde{\tau}$ are necessarily regular at the left endpoint in this case).

\bigskip
\noindent
{\bf Acknowledgments.}
We thank Fritz Gesztesy and Alexander Sakhnovich for hints with respect to the literature.
J.E.\ and G.T.\ gratefully acknowledge the stimulating atmosphere at the {\em Institut Mittag-Leffler} during spring 2013 where parts of this paper were written during the international research program on {\em Inverse Problems and Applications}.

\end{document}